\numberwithin{equation}{section}
\theoremstyle{plain}
\newtheorem{theorem}{Theorem}[section]
\newtheorem{proposition}[theorem]{Proposition}
\newtheorem{lemma}[theorem]{Lemma}
\newtheorem{conjecture}{Conjecture}
\theoremstyle{definition}
\title{Best Constant and extremal functions for a class  HARDY-SOBOLEV-MAZ'YA INEQUALITIES}
\author{Daowen Lin}
\address{Geometric Partial Differential Equations Unit, Okinawa Institute of Science and Technology Graduate University, 1919-1 Tancha, Onna-son, Kunigami-gun, Okinawa, 904-0495, Japan}
\email{daowen.lin@oist.jp}
\author{Xi-Nan Ma}
\address{School of Mathematical Sciences, University of Science and Technology of China, Hefei, Anhui Province, China, 230026}
\email{xinan@ustc.edu.cn}
\begin{document}
\maketitle
\begin{abstract}
	We derive an integral identity  for a class $p$-Laplace equation, and then classify all positive finite energy cylindrically symmetric  solutions of the equation (\ref{1.2}) for  $3\leq k\leq n-1,$ with the help of some a prior estimates. Combining this with the result of Secchi-Smets-Willem{\cite{SSW03}}, as a consequence, we obtain the best constant and extremal functions  for  the related Hardy-Sobolev-Maz'ya inequalities.
\end{abstract}
\section{Introduction}

In this paper, we study the positive extremals for the following Hardy-Sobolev-Maz'ya inequalities when $s=1, k\ge3$.
\begin{theorem}{\cite{BT02}}
	Let $n\geq 3$, $2\leq k\leq n$, and $p,s$ be real numbers satisfying $1<p<n$, $0\leq s\leq p$ and $s<k$, There exists a positive constant $S_{p,s}=S(p,s,n,k)$ such that for all $u\in D^{1,p}(\mathbb{R}^n)$, we have
	\begin{align}\label{1.1}
		\left(\int_{\mathbb{R}^{n}}\frac{|u|^{\frac{p(n-s)}{n-p}}}{|y|^{s}}dx\right)^{\frac{n-p}{p(n-s)}}\leq S_{p,s}\left(\int_{\mathbb{R}^n}|\nabla u|^pdx\right)^{\frac{1}{p}},
	\end{align}
	where for $p>1$, we define the space $D^{1,p}(\mathbb{R}^n)$ as the closure of $C_c^{\infty}(\mathbb{R}^n)$ with respect to the norm 
	\begin{align*}
		||u||_{D^{1,p}(\mathbb{R}^n)}=\left(\int_{\mathbb{R}^n}|\nabla u|^pdx\right)^{\frac{1}{p}}.
	\end{align*} 
\end{theorem}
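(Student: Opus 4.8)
The plan is to derive (\ref{1.1}) by interpolating the Sobolev inequality on $\mathbb{R}^n$ against a Hardy-type inequality in the variables $y$; here and below I write $x=(y,z)\in\mathbb{R}^k\times\mathbb{R}^{n-k}$ and $p^{*}(s)=\frac{p(n-s)}{n-p}$. Since $C_c^\infty(\mathbb{R}^n)$ is dense in $D^{1,p}(\mathbb{R}^n)$ and $|\nabla|u||=|\nabla u|$ a.e., it is enough to prove (\ref{1.1}) for nonnegative $u\in C_c^\infty(\mathbb{R}^n)$; note also that only the existence of a positive $S_{p,s}$ is asserted, so no sharpness is needed at this stage.

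\emph{The range $p<k$.} Here I would use two ingredients: the Sobolev inequality $\big(\int_{\mathbb{R}^n}|u|^{pn/(n-p)}\,dx\big)^{(n-p)/(pn)}\le S_0\big(\int_{\mathbb{R}^n}|\nabla u|^p\,dx\big)^{1/p}$, which holds because $1<p<n$, and the Hardy inequality in $\mathbb{R}^k$ applied on each slice $\{z=\text{const}\}$ and integrated in $z$, $\int_{\mathbb{R}^n}|u|^p|y|^{-p}\,dx\le\big(\tfrac{p}{k-p}\big)^p\int_{\mathbb{R}^n}|\nabla u|^p\,dx$, which holds because $p<k$. Put $a=1-\tfrac{s}{p}\in[0,1]$, so that $(1-a)p=s$ and $a\,\tfrac{pn}{n-p}+(1-a)p=p^{*}(s)$. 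Then $\tfrac{|u|^{p^{*}(s)}}{|y|^s}=|u|^{a\,pn/(n-p)}\big(\tfrac{|u|}{|y|}\big)^{(1-a)p}$ pointwise, and Hölder's inequality with exponents $1/a$ and $1/(1-a)$ gives $\int_{\mathbb{R}^n}\tfrac{|u|^{p^{*}(s)}}{|y|^s}\,dx\le\big(\int_{\mathbb{R}^n}|u|^{pn/(n-p)}\,dx\big)^{a}\big(\int_{\mathbb{R}^n}|u|^p|y|^{-p}\,dx\big)^{1-a}$. Substituting the two bounds and raising to the power $\tfrac{n-p}{p(n-s)}$ produces (\ref{1.1}) with an explicit constant, for every $0\le s\le p$.

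\emph{The range $k\le p$.} Now $s<k\le p$, the fiberwise Hardy inequality is no longer available, and the estimate must genuinely couple $y$ and $z$. I would first Schwarz-symmetrize $u$ in $y$ on each slice $\{z=\text{const}\}$: since $|y|^{-s}$ is radially nonincreasing, the Hardy--Littlewood inequality keeps the left side of (\ref{1.1}) from decreasing, while the P\'olya--Szeg\H{o} principle in $y$ together with the $L^p$-contraction property of rearrangement in the transverse variable keeps $\int_{\mathbb{R}^n}|\nabla u|^p$ from increasing; hence one may assume $u=u(|y|,z)$ is nonincreasing in $r:=|y|$. Integrating the fundamental theorem of calculus along the rays $r\mapsto u(r,z)$ and using $s<k$ one obtains a one-dimensional weighted Hardy inequality in $r$; combining it with the Sobolev inequality in the $z$-variables, Hölder, and a splitting of $\{|y|\le 1\}$ --- where the weighted one-dimensional estimate absorbs the singularity --- from $\{|y|\ge 1\}$ --- where the weight is bounded and the plain Sobolev inequality applies --- yields (\ref{1.1}) after patching.

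I expect the range $k\le p$ to be the main obstacle: one cannot simply multiply a one-variable Hardy inequality by an $(n-k)$-dimensional Sobolev inequality, so the symmetrization step --- in particular the transverse contraction property that keeps $\int_{\mathbb{R}^n}|\nabla u|^p$ under control --- and the near/far splitting at the singular set $\{y=0\}$ must be carried out carefully in order to end up with a single constant $S_{p,s}$ valid for all $u\in D^{1,p}(\mathbb{R}^n)$.
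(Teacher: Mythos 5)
This statement is quoted from Badiale--Tarantello \cite{BT02} and the paper gives no proof of it, so I can only assess your argument on its own terms. Your treatment of the range $p<k$ is correct and complete: the exponent bookkeeping $(1-a)p=s$, $a\,\tfrac{pn}{n-p}+(1-a)p=p^*(s)$ checks out, the slice-wise Hardy inequality in $\mathbb{R}^k$ (valid precisely because $p<k$) combined with $|\nabla_y u|\le|\nabla u|$ and the Sobolev inequality closes the estimate, and the reduction to nonnegative $C_c^\infty$ functions is harmless. This is the standard interpolation proof and, in the sub-range $p<k$, it fully establishes the theorem.

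The range $k\le p$ is where the theorem has its real content, and there your argument has a genuine gap rather than just missing details. Most concretely, the claim that on $\{|y|\ge 1\}$ ``the weight is bounded and the plain Sobolev inequality applies'' fails: since $p^*(s)<p^*=\tfrac{pn}{n-p}$ for $s>0$ and the region $\{|y|\ge1\}$ has infinite measure, you cannot bound $\int_{|y|\ge1}|u|^{p^*(s)}\,dx$ by a power of $\|u\|_{L^{p^*}}$; if you instead try H\"older against the weight, the conjugate exponent works out to $\int|y|^{-n}\,dy\,dz$, which diverges both at $y=0$ and in $z$. So neither the near nor the far piece of your splitting is controlled by the tools you name, and the decay of $|y|^{-s}$ at infinity (in $y$) must enter the estimate in an essential, scale-invariant way. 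The preliminary symmetrization step is legitimate (partial Schwarz symmetrization in $y$ does not increase $\int|\nabla u|^p$ and does not decrease the left-hand side; this is exactly the rearrangement used in \cite{SSW03} and quoted as Proposition \ref{prop3.0}), but the subsequent ``one-dimensional weighted Hardy in $r$ plus Sobolev in $z$ plus patching'' is never written down as an inequality with exponents, and the naive product of a radial Hardy inequality with a transverse Sobolev inequality does not produce the full gradient $\left(|\nabla_y u|^2+|\nabla_z u|^2\right)^{p/2}$ on the right with the correct homogeneity. The known proofs of this case (Maz'ya's capacitary argument, or the H\"older splitting $\tfrac{|u|^{p^*(s)}}{|y|^s}=\bigl(\tfrac{|u|}{|y|}\bigr)^{s}|u|^{p^*(s)-s}$ reduced to a Hardy inequality that again needs $p<k$, or genuinely mixed-norm arguments) all require an additional idea that your sketch does not supply; since the paper applies the theorem with $s=1$, $k\ge3$ and arbitrary $p\in(1,n)$, the case $k\le p$ cannot be discarded.
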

For $s=1$, the Euler-Lagrange equation associated to (\ref{1.1}) is
\begin{equation}\label{1.2}
\begin{aligned}
	-\Delta_p u&=\frac{u^{p^*(1)-1}}{|y|}\,\, \textrm{in}\,\, \mathbb{R}^n,\\
	u&>0,\\
	u&\in D^{1,p}(\mathbb{R}^n),
\end{aligned}
\end{equation}
where $p^*(1)=\frac{p(n-1)}{n-p}$, $x=(y,z)$, $\mathbb{R}^n=\mathbb{R}^{k}\times\mathbb{R}^{n-k}$.

 If $s=p$, (\ref{1.1}) is an extension of the classical Hardy inequalities.
 
 The case $s=0$ corresponds to the classical Sobolev inequalities, whose finite energy extremal functions have been classified by \cite{Sci16} and \cite{Vet16}, or see \cite{CFR20} for the convex cone case via integral by part method and a priori estimates. 
 
In more general case $0\leq s<p$ and $k=n$, the best constant have been computed in \cite{GY 2000}, their extremal functions are, up to dilations and translations,
\begin{align*}
	U(x)=(1+|x|^{\frac{p-s}{p-1}})^{-\frac{n-p}{p-s}}
\end{align*} 

More general inequalities have been considered by Caffarelli-Kohn-Nirenberg\cite{CKN}:
\begin{align*}
	\left(\int_{\mathbb{R}^d}|x|^{-bq}u^qdx\right)^{\frac{1}{q}}\leq C_{a,b}\left(\int_{\mathbb{R}^d}|x|^{-ap}|\nabla u|^{p}dx\right)^{\frac{1}{p}}
\end{align*}
where $p>1, a<b<a+1, q=\frac{dp}{d-p(1+a-b)}, 0\leq a<\frac{d}{p}-1, C_{a,b}=C_{a,b}(d,p,a,b,q).$ They turn out to be of the same form as in the case $a=0$, for general, we refer to \cite{DEM16} and \cite{CC21} for the recent progress. 

When $k<n$, extremal functions cannot be anymore radially symmetric. Cylindrical symmetry of some extremal functions has been proved Secchi-Smets-Willem \cite{SSW03}, by symmetrizations arguments.

For $p=2$, $s=1$, Mancini-Fabbri-Sandeep \cite{MFS06} classified all positive finite energy solutions  using moving plane method and  a differential identity from \cite{GV} with some a prior pointwise estimates. 

Independently, Alvino-Ferone-Trombetti also investigated when $s=1$ in \cite{AFT06}, they have a conjecture as follow in their article, but they could only address when $n=3, k=2, p=2$.
\begin{conjecture}\cite{AFT06}\label{1.3}
	Let $u_0$ be the function given by 
	\begin{align}
		u_0(x)=u_0(y,z)=C_{p,n,k}[(1+|y|)^2+|z|^2]^{-\frac{n-p}{2(p-1)}},
	\end{align}
	where $C_{p,n,k}=(k-1)^{\frac{n-p}{p(p-1)}}(n-p)^{\frac{n-p}{p}}(p-1)^{-\frac{n-p}{p}}$. Then $u$ is an extremal function for (\ref{1.1}) if $u(y,z)=\lambda^{\frac{n-p}{p}}u_0(\lambda y,\lambda z+z_0)$ for some $\lambda>0$ and $z_0\in \mathbb{R}^{n-k}$.
\end{conjecture}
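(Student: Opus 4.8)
The plan is to prove Conjecture~\ref{1.3} in two stages: first invoke the symmetrization theorem of Secchi--Smets--Willem \cite{SSW03} to reduce to \emph{cylindrically symmetric} extremals, i.e. solutions of \eqref{1.2} of the form $u=u(|y|,z)$, and then classify all positive finite-energy cylindrically symmetric solutions, showing each of them equals $u_0$ up to the scaling $u(y,z)=\lambda^{(n-p)/p}u_0(\lambda y,\lambda z+z_0)$. For such a $u$ set $\sigma=|y|\in[0,\infty)$; passing to the variables $(\sigma,z)$, the energy and the weighted $L^q$ term become, up to the constant $|S^{k-1}|$, integrals over the half-space $\mathbb{R}^{n-k+1}_{+}=\{(\sigma,z):\sigma>0,\ z\in\mathbb{R}^{n-k}\}$ carrying the weights $\sigma^{k-1}$ on $|\nabla u|^p$ and $\sigma^{k-2}$ on the nonlinearity, so that $u$ solves
\begin{align*}
-\operatorname{div}_{(\sigma,z)}\!\bigl(\sigma^{k-1}|\nabla u|^{p-2}\nabla u\bigr)=\sigma^{k-2}\,u^{p^*(1)-1}\qquad\text{in }\mathbb{R}^{n-k+1}_{+},
\end{align*}
together with the natural Neumann-type condition on $\{\sigma=0\}$. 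In these coordinates the conjectured $u_0$ is, up to a constant, $\bigl[(1+\sigma)^2+|z|^2\bigr]^{-(n-p)/(2(p-1))}$, a profile radial about the \emph{ghost point} $(\sigma,z)=(-1,0)$ lying outside the half-space. The range $3\le k\le n-1$ is exactly what makes this picture usable: $k\le n-1$ keeps a genuine $z$-variable (the case $k=n$ is \eqref{1.1} with $s=1$, already solved in \cite{GY 2000}), while $k\ge3$ forces the weight $\sigma^{k-2}$ to vanish on the axis, which later annihilates all boundary terms along $\{\sigma=0\}$.

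Next I would establish the a priori estimates needed to run the integral identity: (i) the sharp decay $0<u(x)\lesssim(1+|x|)^{-(n-p)/(p-1)}$ and $|\nabla u(x)|\lesssim(1+|x|)^{-(n-p)/(p-1)-1}$ as $|x|\to\infty$, coming from the finite-energy hypothesis, a Moser/De Giorgi iteration adapted to the weight $\sigma^{k-2}$, and a comparison with $u_0$ itself; and (ii) interior $C^{1,\alpha}$ regularity together with the vanishing of the conormal derivative of $u$ along $\{y=0\}$, which is automatic for cylindrically symmetric solutions since near the axis $u$ is a function of $|y|^2$. These estimates guarantee that every integral appearing below converges and that the boundary contributions at infinity and on $\{\sigma=0\}$ vanish.

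The heart of the argument is an integral identity in the spirit of the integration-by-parts method used for the Sobolev case in \cite{CFR20}. Introduce the new unknown $w:=u^{-2(p-1)/(n-p)}$, which for $u=u_0$ is (proportional to) the affine-quadratic function $(1+\sigma)^2+|z|^2$ of $(\sigma,z)$, so the classification reduces to proving that $w$ is affine-quadratic. One builds a $P$-function — a weighted combination of $|\nabla w|^p$, $w$ and $\sigma^{k-1}$ — tailored so that the equation for $u$ forces a differential inequality for $P$ whose equality case is exactly the affine-quadratic profile. Contracting this inequality with a Pohozaev-type multiplier (a vector field which, since $u$ solves \eqref{1.2} \emph{exactly}, converts the a priori inequality into an identity) and discarding the boundary terms by the estimates above, one is led to
\begin{align*}
\int_{\mathbb{R}^{n-k+1}_{+}}\sigma^{k-1}\,Q\bigl(\nabla^2 w\bigr)\,d\sigma\,dz=0,
\end{align*}
where $Q(\nabla^2 w)\ge0$ pointwise and vanishes only when $\nabla^2 w$ is a constant multiple of the identity. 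Hence $Q(\nabla^2 w)\equiv0$, $w$ is affine-quadratic, and substituting back into the equation pins down the constant $C_{p,n,k}$ and identifies $u$ with $u_0$ up to the stated scaling. Combined with the existence and cylindrical symmetry of extremals from \cite{SSW03}, this proves Conjecture~\ref{1.3} and yields the best constant $S_{p,1}$ as the value of the Rayleigh quotient at $u_0$.

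I expect the main obstacle to be the interplay between the degeneracy of the $p$-Laplacian on the critical set $\{\nabla u=0\}$ and the singular weight $|y|^{-1}$. On $\{\nabla u=0\}$ the operator degenerates and $u$ is only $C^{1,\alpha}$, so the $P$-function computation must be carried out for the regularized operator obtained by replacing $|\nabla u|^{p-2}$ with $(\varepsilon^2+|\nabla u|^2)^{(p-2)/2}$ and then passing to the limit $\varepsilon\to0$; this requires showing that the critical set is negligible — one expects it to consist of the single maximum point of $u$ — and controlling the regularization errors uniformly near it. Simultaneously one must verify that no boundary term survives from the shrinking tube $\{|y|=\delta\}$ around the axis as $\delta\to0$, which is precisely where the estimate on $\nabla_y u$ near $\{y=0\}$ and the hypothesis $k\ge3$ enter. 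Getting these two limiting procedures to cooperate, and checking that the Pohozaev multiplier identity closes with no leftover terms, is the crux of the proof.
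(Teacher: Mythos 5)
Your overall strategy is the one the paper follows: reduce to cylindrically symmetric solutions via Secchi--Smets--Willem, establish a priori estimates, and then run an integration-by-parts (Obata/Jerison--Lee type) identity whose integrand is a sum of squares vanishing exactly on the conjectured profile. However, two steps in your outline are genuine gaps rather than routine verifications.

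First, the pointwise gradient decay $|\nabla u(x)|\lesssim (1+|x|)^{-\frac{n-p}{p-1}-1}$ that you use to kill all boundary terms is not available. Because the right-hand side of (\ref{1.2}) is singular on $\{y=0\}$, the best pointwise bound one can prove near the axis is $|\nabla u|\leq C/|y|$, and the paper explicitly states that the pointwise estimates available in the $p=2$ case cannot be reproduced here. The whole technical core of the paper (Lemma \ref{lem3.3} for $k>2p$ and Lemma \ref{lem3.4} for $p\geq k/2$, built on Dutta's sharp $C^0$ bound, Xiang's gradient estimate, and the Antonini--Ciraolo--Farina second-order iteration) is devoted to replacing the missing pointwise bound by \emph{integral} gradient estimates of the form $\int_{B_R}|\nabla u|^{q}u^{\gamma}\leq C(1+R^{\ldots})$, which are then just barely strong enough to make the cutoff errors near the axis vanish like $\varepsilon^{\mu}$ with $\mu>0$; this is exactly where the restriction $k\geq 3$ bites and why $k=2$ remains open. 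Your plan as written would collapse at the step ``discarding the boundary terms by the estimates above.''

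Second, for general $p$ the statement that one can build a quadratic form $Q$ with $\int \sigma^{k-1}Q(\nabla^2 w)=0$, $Q\geq 0$, with equality forcing $\nabla^2 w$ proportional to the identity, hides the main algebraic difficulty. The natural matrix in the $p$-Laplace setting is $X^i_j=\partial_j(|\nabla\phi|^{p-2}\partial_i\phi)$, which is \emph{not} symmetric, so the nonnegativity of $E^i_jE^j_i$ (trace of the square, not of the square of a symmetric matrix) requires an argument: the paper factors $X^i_j-\tfrac{p-1}{p}\tfrac{|\nabla\phi|^p}{\phi}\delta_{ij}$ as a positive-definite matrix times a symmetric one and invokes a lemma of Cianchi--Maz'ya to get Proposition \ref{prop4.1} and the absorption Lemma \ref{lemma4.2}. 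Moreover, the identity only closes with a specific choice of auxiliary function and of the correction term $B$ in (\ref{4.2b}) eliminating the $\tfrac{a}{r}|\nabla\phi|^p$ term; the paper's substitution $u=\kappa\phi^{-(n-p)/p}$ (so $\phi$ is a power of your $w$, not $w$ itself) is what makes the equation (\ref{4.1}) and the divergence structure compatible. You correctly identify these as the crux, but the proposal does not supply them, and they are precisely the content of the paper's Sections 2--3.
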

Invoking the cylindrical symmetry results in \cite{SSW03},  we generalize the differential identity in \cite{GV}, which is motivated from the idea by Jerison-Lee \cite{JL1987}. But unfortunately, we can't get the similar a prior point estimates for $p$-Laplace equations. Since what we need to characterize the cylindrically symmetric solutions of (\ref{1.2}) are almost integral estimates, with the help of sharp $C^0$ estimate in \cite{Dut22},  the gradient estimates using the ideas from Xiang \cite{Xiang17} and  Antonini-Ciraolo-Farina\cite{ACF} . We can tackle some cases:

\begin{theorem}\label{them1.4}
	For $s=1,n\geq 4,3\leq k\leq n-1$, $u$ is a cylindrically symmetric solution of (\ref{1.2}) then $u(y,z)=\lambda^{\frac{n-p}{p}}u_0(\lambda y,\lambda z+z_0)$ for some $\lambda>0$ and $z_0\in \mathbb{R}^{n-k}$.
\end{theorem}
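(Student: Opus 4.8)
The plan is to combine sharp a priori estimates with a Jerison--Lee type integral identity adapted to the degenerate equation (\ref{1.2}) and its cylindrical geometry. Since $u$ is cylindrically symmetric we may write $u=u(|y|,z)$, and --- using the $z$-symmetrisation underlying \cite{SSW03} --- also $u=u(|y|,|z-z_0|)$ after a translation in $z$; since (\ref{1.2}) is invariant under $u\mapsto\lambda^{\frac{n-p}{p}}u(\lambda y,\lambda z+z_0)$, it suffices to show that a suitably normalised cylindrically symmetric solution equals $u_0$. In the variable $t=|y|\in(0,\infty)$ equation (\ref{1.2}) becomes a degenerate elliptic equation on $\{t>0\}\times\mathbb{R}^{n-k}$ carrying the weight $t^{k-1}$ in the volume element and the singular factor $t^{-1}$ on the right-hand side; one also checks directly that $u_0$ from Conjecture \ref{1.3} is a solution, so only the rigidity is at issue.

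First I would establish the precise asymptotics of $u$. The sharp $C^0$ estimate of \cite{Dut22} gives the decay $u(x)=O\big(|x|^{-\frac{n-p}{p-1}}\big)$ as $|x|\to\infty$ with a matching lower bound, and the corresponding behaviour near the singular set $\{y=0\}$. Inserting this into gradient estimates in the spirit of Xiang \cite{Xiang17} and Antonini--Ciraolo--Farina \cite{ACF} controls $|\nabla u|$, and (where needed) $|\nabla u|/u$ and the second-order quantities appearing in the identity, by the same power of $|x|$, in the weak sense valid off the critical set $\{\nabla u=0\}$ where $\Delta_p$ degenerates. These bounds are exactly what makes the boundary terms arising below --- at infinity, along $\{y=0\}$, and on shrinking neighbourhoods of $\{\nabla u=0\}$ --- disappear.

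Next comes the core identity. Following the differential identity of \cite{GV}, itself modelled on Jerison--Lee \cite{JL1987}, I would test (\ref{1.2}) against a carefully chosen function of $u$ and $|\nabla u|^{p}$ --- equivalently, integrate the divergence of a vector field built from $u$, $\nabla u$ and $\nabla^2 u$ --- and integrate by parts twice, using the equation to remove $\Delta_p u$. The goal is an identity
\begin{equation*}
0=\int_{\mathbb{R}^{n}}Q\,d\mu ,
\end{equation*}
where $d\mu$ is an appropriate weighted measure and $Q\ge 0$ is a sum of squares measuring the failure of a traceless Hessian-type tensor of the power $v=u^{-\beta}$, $\beta=\tfrac{2(p-1)}{n-p}$, to vanish; for $u=u_0$ this $v$ is, in the variables $(t,z)$, a constant times the affine--quadratic profile $(1+t)^2+|z|^2$, which is precisely why a quadratic rigidity survives for $p\neq2$. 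The cutoffs near $\{\nabla u=0\}$, $\{y=0\}$ and $\infty$ are legitimised by the estimates of the previous step, so one concludes $Q\equiv0$. The hypotheses $n\ge 4$ and $3\le k\le n-1$ should enter here: they supply the correct sign of the coefficients produced when the cross terms are completed to squares (for $k\le 2$ the effective dimension is too small and a sign is lost) and make the codimension-$k$ set $\{y=0\}$ thin enough that the tube integrals $\sim\varepsilon^{k-1}$ beat the singular factors coming from $|y|^{-1}$ in $Q$, while $k\le n-1$ merely stays out of the purely radial regime of \cite{GY 2000}.

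Finally, $Q\equiv0$ is an overdetermined system: the traceless part of the (weighted) Hessian of $v$ vanishing forces $v$ to be affine--quadratic in the cylindrical variables, and the sharp decay fixes the free constants and removes the linear part in $z$ up to a translation $z_0$, identifying $u$ with $\lambda^{\frac{n-p}{p}}u_0(\lambda y,\lambda z+z_0)$. The hard part, as the authors themselves note, is that the clean pointwise estimates available when $p=2$ --- which in \cite{MFS06} feed a moving-plane argument --- have no counterpart for general $p$, so the whole argument must go through integral estimates only. Concretely, the two delicate points are (i) guessing the test vector field so that $Q$ comes out genuinely nonnegative for the \emph{weighted, cylindrical} $p$-Laplacian, not merely for the model Sobolev operator, and (ii) making the double integration by parts rigorous across the degeneracy locus of $\Delta_p$ and the singular weight $|y|^{-1}$, which is exactly where the sharp estimates of \cite{Dut22}, \cite{Xiang17} and \cite{ACF} become indispensable.
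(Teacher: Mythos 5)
Your strategy is the paper's strategy: reduce to the cylindrical variables, use Dutta's sharp $C^0$ bound and Xiang/Antonini--Ciraolo--Farina-type gradient bounds to control everything in an integral sense, write a Garofalo--Vassilev/Jerison--Lee divergence identity whose right-hand side is a nonnegative sum of squares, kill the boundary and cutoff contributions (at infinity, near $\{y=0\}$ with the tube volume $\varepsilon^{k}$ beating the $\varepsilon^{-2}$ from the cutoff precisely when $k\ge 3$, and across the degeneracy set of $\Delta_p$), and then solve the resulting overdetermined system. Your identification of where $k\ge3$ and $k\le n-1$ enter is also essentially correct (the coefficient $(k-1)(n-k-1)/(n-2)$ and the exponent $\mu>0$ in the tube estimate).

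However, the two steps you yourself flag as ``the delicate points'' are exactly the content of the proof, and your sketch does not supply them; moreover the one concrete structural guess you make points in a direction that does not work for $p\neq2$. You propose that the sum of squares measures the failure of the \emph{traceless Hessian} of $v=u^{-2(p-1)/(n-p)}$ to vanish, with quadratic rigidity $v\sim(1+|y|)^2+|z|^2$. For the $p$-Laplacian the natural second-order object is not $\nabla^2 v$ but the Jacobian of the stress field: the paper sets $\phi=(u/\kappa)^{-p/(n-p)}$, works with $X^i=|\nabla\phi|^{p-2}\phi_i$ and $X^i_j=\partial_j X^i$ in the reduced variables $(r,t)=(|y|,|z|)$, and the rigidity coming from the vanishing of the squares is that $X$ is \emph{affine} ($|\nabla\phi|^{p-2}\phi_r=br+\tfrac{a}{k-1}$, $|\nabla\phi|^{p-2}\phi_t=bt$) together with $\Delta_p\phi=\tfrac{2(p-1)}{p}\tfrac{|\nabla\phi|^p}{\phi}$; integrating this gives $\phi=\lambda^{-1}[(\lambda t)^2+(\lambda r+1)^2]^{p/(2(p-1))}$, a $\tfrac{p}{p-1}$-power of a quadratic rather than a quadratic. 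A plain-Hessian identity of the type you describe is not known to close for $p\neq 2$. Two further technical points your sketch omits but which are essential: (i) since $X^i_j$ is not symmetric, the nonnegativity of $E^i_jE^j_i$ (and the Cauchy--Schwarz step absorbing $\nabla\zeta\cdot Y$) requires factoring $X^i_j-\tfrac{p-1}{p}\tfrac{|\nabla\phi|^p}{\phi}\delta_{ij}$ as a positive-definite matrix times a symmetric one (Lemma 4.5 of Cianchi--Maz'ya); (ii) the integral gradient bounds needed to make the cutoff terms vanish split into two regimes, $k>2p$ (scaling plus $|\nabla u|\le C/|y|$) versus $\tfrac{3}{2}\le\tfrac{k}{2}\le p$ (a Moser-type iteration on $w=\varepsilon+|\nabla u_\varepsilon|^2$ for regularized problems), and this dichotomy is where most of the work lies.
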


From the above theorem, we can give a positive answer for the 
conjecture by Alvino-Ferone-Trombetti\cite{AFT06} when $k\ge 3$. Since we can't get the a priori estimates, the case $k=2$ is still open.

As we had remarked that the best constant of Hardy-Sobolev-Maz'ya inequalities (\ref{1.1}) achieve at some cylindrically symmetric functions  \cite{SSW03}.
	
  We give some more reference for the best constant and extremal function on the Hardy-Sobolev-Maz'ya inequalities on $p=2$, please see the recent progress in [\cite{BFL08}, \cite{CL08},  \cite{CFMS08}, \cite{CFMS09}, \cite{MS08}, \cite{V11}] etc., they used the hyperbolic symmetry or the moving plane methods.
  
  The paper is organized as follows. In section 2, we first review the cylindrically symmetric for the extremal function by Secchi-Smets-Willem \cite{SSW03}. Then we give the  sharp $C^0$ estimate by Dutta \cite{Dut22},  the gradient estimates using the idea from  Xiang \cite{Xiang17} and Antonini-Ciraolo-Farina\cite{ACF}.  In section 3, we  give a generalization of the differential  identity for a class p-Laplace operator  via the ideas from 	Serrin-Zou \cite{SZ02},  \cite{GV} and \cite{MFS06}, as we noted which is motivated from the idea Jerison-Lee \cite{JL1987} for the Laplace operator case.
  Then, using this generalized differential identity and  the integral estimates  in section 2, we complete the proof of Theorem \ref{them1.4}.

\section{cylindrically symmetry properties and  a priori estimates}
In this sections, we first show  briefly that the best constant of Hardy-Sobolev-Maz'ya inequalities achieve at some cylindrically symmetric functions in Secchi-Smets-Willem\cite{SSW03}, and  we  state the $C^0$ estimates by Dutta \cite{Dut22}. Then we mainly obtain the integral gradient estimates in Lemma \ref{lem3.3} and Lemma \ref{lem3.4}, where we use the ideas from  Xiang \cite{Xiang17} and  Antonini-Ciraolo-Farina\cite{ACF}.  These integral estimates will be used in next section to get the proof of the main  theorem.

As in \cite{SSW03}, we consider the minimization problem
\begin{align}
S_{p,s}=S(p,s,n,k)=\inf\left\{\int_{\mathbb{R}^n}|\nabla u|^p|u\in D^{1,p}(\mathbb{R}^n), \int_{\mathbb{R}^n}\frac{|u|^{p^*(1)}}{|y|}=1\right\}. 	
\end{align}
Let $u\in L^1(\mathbb{R}^n)$ be a non-negative function. Let us denote by $u^*(*,z)$ the Schwarz symmetrization of $u(*,z)$ and $u^{**}(y,*)$ the Schwarz symmetrization of $u^*(y,*)$. It is clear that $u^{**}$ depends only on $(|y|,|z|)$. Let us define
\begin{align*}
	D_{**}^{1,p}(\mathbb{R}^n)=\{u\in D^{1,p}(\mathbb{R}^n)|u=u^{**}\},
\end{align*} 
and
\begin{align}
S^{**}_{p,s}=S^{**}(p,s,n,k)=\inf\left\{\int_{\mathbb{R}^n}|\nabla u|^p|u\in D_{**}^{1,p}(\mathbb{R}^n), \int_{\mathbb{R}^n}\frac{|u|^{p^*(1)}}{|y|}=1\right \}.
\end{align}
From \cite{SSW03}, first we have the following property.
\begin{proposition}\label{prop3.0}\cite{SSW03}
	$S_{p,s}=S^{**}_{p,s}$.
\end{proposition}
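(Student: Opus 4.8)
The plan is to establish the two inequalities $S_{p,s}\le S^{**}_{p,s}$ and $S^{**}_{p,s}\le S_{p,s}$ separately. The first is immediate: since $D^{1,p}_{**}(\mathbb{R}^n)\subset D^{1,p}(\mathbb{R}^n)$, the class of admissible functions defining $S^{**}_{p,s}$ is contained in the one defining $S_{p,s}$, and an infimum taken over a smaller set cannot be smaller; hence $S_{p,s}\le S^{**}_{p,s}$.

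For the reverse inequality I would start from an arbitrary $u\in D^{1,p}(\mathbb{R}^n)$ with $\int_{\mathbb{R}^n}|u|^{p^*(1)}/|y|=1$ and build an admissible competitor for $S^{**}_{p,s}$ whose Dirichlet energy does not exceed $\int_{\mathbb{R}^n}|\nabla u|^p$. Since $|u|\in D^{1,p}(\mathbb{R}^n)$ with $\big|\nabla|u|\big|=|\nabla u|$ a.e., I may assume $u\ge 0$. Let $u^*$ be the Schwarz symmetrization of $u$ in the $y$-variable (with $z$ frozen) and $u^{**}$ the Schwarz symmetrization of $u^*$ in the $z$-variable (with $y$ frozen); by monotonicity of Schwarz symmetrization, $u^{**}$ is radially nonincreasing in $|y|$ for each $z$ and in $|z|$ for each $y$, hence $u^{**}\in D^{1,p}_{**}(\mathbb{R}^n)$. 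Two properties are needed: (i) the weighted integral does not decrease, $\int_{\mathbb{R}^n}|u^{**}|^{p^*(1)}/|y|\ge\int_{\mathbb{R}^n}|u|^{p^*(1)}/|y|$; and (ii) the Dirichlet integral does not increase, $\int_{\mathbb{R}^n}|\nabla u^{**}|^p\le\int_{\mathbb{R}^n}|\nabla u|^p$. For (i), Fubini reduces the $y$-symmetrization step to the slicewise Hardy--Littlewood rearrangement inequality for $|u(\cdot,z)|^{p^*(1)}$ against the symmetric decreasing weight $|y|^{-1}$, giving $\int_{\mathbb{R}^k}|u(y,z)|^{p^*(1)}|y|^{-1}dy\le\int_{\mathbb{R}^k}|u^*(y,z)|^{p^*(1)}|y|^{-1}dy$ for a.e. $z$; and the subsequent $z$-symmetrization leaves $\int_{\mathbb{R}^n}|u^*|^{p^*(1)}/|y|$ unchanged, because the weight is independent of $z$ and Schwarz symmetrization preserves $L^{p^*(1)}$ norms slicewise. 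For (ii), I would invoke the P\'olya--Szeg\H{o} inequality for Schwarz symmetrization with respect to a subset of the coordinates, applied once in the $y$-variables and once in the $z$-variables.

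Granting (i) and (ii), the rescaled function $\tilde u:=u^{**}\big/\big(\int_{\mathbb{R}^n}|u^{**}|^{p^*(1)}/|y|\big)^{1/p^*(1)}$ lies in $D^{1,p}_{**}(\mathbb{R}^n)$, satisfies $\int_{\mathbb{R}^n}|\tilde u|^{p^*(1)}/|y|=1$, and, the normalizing factor being $\ge 1$, obeys $\int_{\mathbb{R}^n}|\nabla\tilde u|^p\le\int_{\mathbb{R}^n}|\nabla u^{**}|^p\le\int_{\mathbb{R}^n}|\nabla u|^p$. Hence $S^{**}_{p,s}\le\int_{\mathbb{R}^n}|\nabla u|^p$ for every admissible $u$, so $S^{**}_{p,s}\le S_{p,s}$, which together with the first inequality gives $S_{p,s}=S^{**}_{p,s}$. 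The only nonelementary point, which I would cite rather than reprove, is the P\'olya--Szeg\H{o} inequality under partial Schwarz symmetrization: since $|\nabla u|^2=|\nabla_y u|^2+|\nabla_z u|^2$ couples the symmetrized directions with the frozen ones, the slicewise bound alone does not suffice and one must also use the non-expansivity of rearrangement in the complementary variables; this is precisely the symmetrization input of \cite{SSW03}.
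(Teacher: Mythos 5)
Your argument is correct and is exactly the two-step symmetrization proof of Secchi--Smets--Willem: the trivial inequality from inclusion of admissible classes, slicewise Hardy--Littlewood for the weighted constraint (noting $|y|^{-1}$ is already symmetric decreasing in $y$ and $z$-independent), and the P\'olya--Szeg\H{o} inequality for partial Schwarz symmetrization for the gradient term, whose non-slicewise nature you rightly flag as the one genuinely nontrivial input. The paper itself gives no proof beyond the citation to \cite{SSW03}, so your write-up supplies the same argument that reference contains.
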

Then we know the best constant of Hardy-Sobolev-Maz'ya inequalities (\ref{1.1}) achieve at some cylindrically symmetric functions.

Now we state a priori $C^0$ estimates  for the Euler-Lagrange equation  (\ref{1.2}) from Dutta \cite{Dut22}.

\begin{proposition}\label{pro3.1}\cite{Dut22}
	Let $u$ be a solution of (\ref{1.2}), then for any $x\in \mathbb{R}^n$, there are universal constants $c, C$ such that 
	\begin{align}\label{3.1}
		\frac{c}{1+|x|^{\frac{n-p}{p-1}}}\leq u(x)\leq \frac{C}{1+|x|^{\frac{n-p}{p-1}}}.
	\end{align}
	\end{proposition}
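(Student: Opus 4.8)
The plan is to establish the two-sided bound by combining the standard regularity/Harnack theory for $p$-Laplace equations with a comparison argument against the known asymptotic profile of solutions. First I would record that any $u\in D^{1,p}(\mathbb{R}^n)$ solving (\ref{1.2}) lies in $C^{1,\alpha}_{\mathrm{loc}}(\mathbb{R}^n\setminus\{y=0\})$ by De Giorgi–Nash–Moser and the Di Benedetto–Tolksdorf regularity theory, and is strictly positive by the strong maximum principle for the $p$-Laplacian; this handles the behaviour on any compact set and reduces both inequalities to a statement about the decay rate as $|x|\to\infty$. The exponent $\frac{n-p}{p-1}$ is exactly the one for which $|x|^{-\frac{n-p}{p-1}}$ is $p$-harmonic away from the origin, so the claim is that $u$ decays like the fundamental solution of $-\Delta_p$.

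The core of the argument is a Kelvin-type transform together with a Caccioppoli/Moser iteration adapted to the weight $|y|^{-1}$. I would set $v(x)=|x|^{-\frac{n-p}{p-1}}u(x/|x|^2)$; a direct computation shows $v$ solves a $p$-Laplace-type equation on a punctured neighbourhood of the origin with a right-hand side still of the form (a power of $v$) times a locally integrable weight, and the finite-energy condition on $u$ transfers to $v\in D^{1,p}$ near $0$. The subcritical integrability of the nonlinearity in the transformed equation — here one uses that $p^*(1)-1$ against the weight $|y|^{-1}$ keeps the right-hand side in a Lebesgue/Morrey class with better-than-critical exponent locally — allows a Serrin-type local boundedness estimate, giving $v\in L^\infty$ near $0$, i.e. the \emph{upper} bound $u(x)\le C(1+|x|)^{-\frac{n-p}{p-1}}$. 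For the \emph{lower} bound I would use the weak Harnack inequality for nonnegative supersolutions of $-\Delta_p$: since $-\Delta_p u\ge 0$, on each annulus $\{|x|\sim R\}$ one has $\inf_{|x|\sim R} u \ge c\, R^{-n/q}\|u\|_{L^q(|x|\sim 2R)}$ for appropriate $q$, and a barrier comparison with a multiple of $|x|^{-\frac{n-p}{p-1}}$ (which is a $p$-harmonic subsolution in the annulus, after checking the sign of the nonlinear term is favourable) propagates a positive lower bound from a fixed sphere out to infinity.

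The main obstacle I anticipate is controlling the singular weight $|y|^{-1}$ uniformly in the comparison/barrier step: near the subspace $\{y=0\}$ the equation is genuinely degenerate–singular in two different senses simultaneously (the $p$-Laplacian degenerates where $\nabla u=0$, and the potential $|y|^{-1}$ blows up on a codimension-$k$ set), so one cannot simply quote a scalar weighted Harnack inequality off the shelf. The resolution is that for $k\ge 3$ the weight $|y|^{-1}$ is in the Muckenhoupt class $A_p$ on $\mathbb{R}^n$ (indeed $|y|^{-\beta}\in A_p$ precisely for $\beta<k$, and here $\beta=1<3\le k$), so Fabes–Kenig–Serapioni-style weighted Harnack and local boundedness estimates apply with constants depending only on $n,p,k$; this is exactly where the hypothesis $k\ge 3$ (or at least $k>1$) enters, and it is why the statement is phrased for the range in Theorem \ref{them1.4}. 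Once the weighted machinery is in place, matching the decay exponent on both sides is a routine scaling computation using the dilation invariance $u\mapsto \lambda^{\frac{n-p}{p}}u(\lambda\cdot)$ of (\ref{1.2}).
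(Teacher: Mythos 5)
The paper does not prove this proposition at all: it is quoted verbatim from Dutta \cite{Dut22}, so there is no internal argument to compare against. Judged on its own merits, your proposal contains one genuine gap that sinks its central step. The upper bound in your plan rests on the Kelvin transform $v(x)=|x|^{-\frac{n-p}{p-1}}u(x/|x|^2)$ converting decay at infinity into boundedness at the origin. For $p\neq 2$ (and $p\neq n$) the $p$-Laplacian is \emph{not} invariant under inversion: the transformed function does not satisfy a $p$-Laplace-type equation, and no substitute Kelvin transform is known. This is precisely why the classification of critical $p$-Laplace solutions (\cite{Sci16}, \cite{Vet16}) and Dutta's decay estimates required different machinery. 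The claim that ``a direct computation shows $v$ solves a $p$-Laplace-type equation'' would fail at the first line of that computation, so your route to $u(x)\le C(1+|x|)^{-\frac{n-p}{p-1}}$ does not go through. The standard replacement (and, in essence, what \cite{Dut22} and \cite{Vet16} do) is: first get $u\in L^\infty$ by Moser iteration with the finite energy; then use the smallness of $\int_{|x|>R}|\nabla u|^p$ together with local boundedness on rescaled annuli to obtain the preliminary decay $u(x)\le C|x|^{-\frac{n-p}{p}}$; and finally bootstrap to the sharp exponent $\frac{n-p}{p-1}$ by comparison with explicit $p$-supersolution barriers in exterior domains, since the right-hand side then decays fast enough to be absorbed.

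Your lower-bound argument is essentially sound: $u$ is a positive $p$-supersolution, so the weak Harnack inequality on annuli plus the comparison principle against the exactly $p$-harmonic function $\epsilon|x|^{-\frac{n-p}{p-1}}$ (matching data on a fixed sphere, both functions vanishing at infinity) does give $u\ge c(1+|x|)^{-\frac{n-p}{p-1}}$ once the upper bound and strict positivity on compact sets are in hand. Two smaller points: the Muckenhoupt observation is not really where the restriction on $k$ enters in this paper (the proposition is quoted without restricting $k$; the hypothesis $k\ge 3$ is used later for $f\in L^2_{\mathrm{loc}}$ and the integral gradient estimates), and for the local theory it is simpler to note directly that $|y|^{-1}u^{p^*(1)-1}\in L^q_{\mathrm{loc}}$ for suitable $q>n/p$ when $k$ is large enough, which is what Serrin-type local boundedness actually needs.
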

We state a a priori gradient estimates for a class $p$-Laplace equation from Xiang \cite{Xiang17}.
\begin{proposition}\label{pro3.2}\cite{Xiang17}
	Let $\Omega$ be a domain in $\mathbb{R}^n$ and $f\in L^{\infty}_{loc}(\Omega)$. Let $w\in W^{1,p}_{loc}(\Omega)$ be a weak solution to equation
	\begin{equation*}
		-\Delta_p w=f 
	\end{equation*}
	in $\Omega$, that is, 
	\begin{equation*}
		\int_{\Omega}|\nabla w|^{p-2}\nabla w\cdot \nabla\varphi=\int_{\Omega}f\varphi,\,\, \forall\,\varphi\in C^{\infty}_0(\Omega). 
	\end{equation*} 
	Then, for any ball $B_{2\delta}(x_0)\subset \Omega$, there holds
	\begin{equation}\label{3.1a}
		\sup_{B_{\frac{\delta}{2}}(x_0)}|\nabla w|\leq C\left(\delta^{-n}\int_{B_{\delta}(x_0)}|\nabla w|^p\right)^{\frac{1}{p}}+C\delta^{\frac{1}{p-1}}||f||^{\frac{1}{p-1}}_{\infty,B_{\delta}(x_0)}.
	\end{equation}
\end{proposition}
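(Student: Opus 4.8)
The plan is to deduce (\ref{3.1a}) from its scale-invariant special case by a dilation, and then to prove that case by a De Giorgi--Moser iteration applied to $|\nabla w|^{2}$ after a nondegenerate approximation.

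\textbf{Step 1 (Scaling reduction).} Translating, we may take $x_{0}=0$; set $v(x):=w(\delta x)$ on $B_{2}$. Then $-\Delta_{p}v=\widetilde f:=\delta^{p}f(\delta\,\cdot\,)$ in $B_{2}$ with $\|\widetilde f\|_{\infty,B_{1}}=\delta^{p}\|f\|_{\infty,B_{\delta}}$, while $\sup_{B_{1/2}}|\nabla v|=\delta\sup_{B_{\delta/2}}|\nabla w|$ and $\int_{B_{1}}|\nabla v|^{p}=\delta^{p-n}\int_{B_{\delta}}|\nabla w|^{p}$. Hence (\ref{3.1a}) is equivalent to the case $\delta=1$,
\begin{equation*}
\sup_{B_{1/2}}|\nabla w|\le C\Big(\int_{B_{1}}|\nabla w|^{p}\Big)^{1/p}+C\|f\|_{\infty,B_{1}}^{1/(p-1)},
\end{equation*}
and it suffices to prove this.

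\textbf{Step 2 (Regularization and the gradient equation).} Let $f_{\varepsilon}=f*\rho_{\varepsilon}$, so $\|f_{\varepsilon}\|_{\infty,B_{1}}\le\|f\|_{\infty,B_{1}}$, and let $w_{\varepsilon}\in w+W_{0}^{1,p}(B_{1})$ solve $-\mathrm{div}\big((\varepsilon+|\nabla w_{\varepsilon}|^{2})^{\frac{p-2}{2}}\nabla w_{\varepsilon}\big)=f_{\varepsilon}$ in $B_{1}$; by the classical theory $w_{\varepsilon}\in C^{\infty}(B_{1})$, and by monotonicity $w_{\varepsilon}\to w$ in $W^{1,p}(B_{1})$ with $\nabla w_{\varepsilon}\to\nabla w$ a.e. It therefore suffices to prove the displayed estimate for $w_{\varepsilon}$ with $C$ independent of $\varepsilon$, then let $\varepsilon\to0$ (the $C^{1,\alpha}$ theory of DiBenedetto--Tolksdorf also makes the pointwise supremum for $w$ meaningful). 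Differentiating the equation in $x_{k}$, $u_{k}:=\partial_{k}w_{\varepsilon}$ solves $-\partial_{i}(a_{ij}\partial_{j}u_{k})=\partial_{k}f_{\varepsilon}$ with
\begin{equation*}
a_{ij}=\omega\Big(\delta_{ij}+(p-2)\frac{u_{i}u_{j}}{\varepsilon+|\nabla w_{\varepsilon}|^{2}}\Big),\qquad \omega:=(\varepsilon+|\nabla w_{\varepsilon}|^{2})^{\frac{p-2}{2}},
\end{equation*}
so that $\min(1,p-1)\,\omega|\xi|^{2}\le a_{ij}\xi_{i}\xi_{j}\le\max(1,p-1)\,\omega|\xi|^{2}$. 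Consequently $V:=|\nabla w_{\varepsilon}|^{2}$ is a weak subsolution: for $\phi\ge0$,
\begin{equation*}
\int a_{ij}\partial_{j}V\,\partial_{i}\phi+2\int a_{ij}\Big(\sum_{k}\partial_{i}u_{k}\partial_{j}u_{k}\Big)\phi=-2\int f_{\varepsilon}\big((\Delta w_{\varepsilon})\phi+\nabla w_{\varepsilon}\cdot\nabla\phi\big),
\end{equation*}
where the right-hand side is obtained by moving $\partial_{k}$ off $f_{\varepsilon}$, so that only $\|f_{\varepsilon}\|_{\infty}\le\|f\|_{\infty}$ enters — consistent with $f$ being merely $L^{\infty}_{loc}$, since no derivative of $f$ is ever used.

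\textbf{Step 3 (Iteration).} With test functions $\phi=\eta^{2}(V-\ell)_{+}^{\beta}$ one derives a Caccioppoli inequality: the second-derivative terms created by $\nabla\phi$ and by $\Delta w_{\varepsilon}$ are absorbed into the coercive term $\int a_{ij}\sum_{k}\partial_{i}u_{k}\partial_{j}u_{k}\,\phi$ using Young's inequality, leaving a remainder of order $\|f\|_{\infty}^{2}$ times a weight factor. The point that cures the degeneracy is to iterate only over the super-level sets $\{V>\ell\}$ with $\ell$ comparable to $\sup V$: there $\omega$ is pinched between two constants independent of $\varepsilon$, so the linear equation for $u_{k}$ is \emph{effectively uniformly elliptic} on the relevant region (DiBenedetto's scheme). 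Sobolev's inequality and iteration in $\beta$ then yield
\begin{equation*}
\sup_{B_{1/2}}V\le C\int_{B_{3/4}}V^{p/2}\,dx+C\|f\|_{\infty,B_{1}}^{2/(p-1)},
\end{equation*}
the passage from an $L^{2}$-type average to the $L^{p/2}$-average being the standard interpolation across a chain of intermediate radii that trades powers of $\sup V$ via Young's inequality. Taking square roots, using $\int_{B_{3/4}}V^{p/2}\le\int_{B_{1}}|\nabla w_{\varepsilon}|^{p}$, and letting $\varepsilon\to0$ gives the $\delta=1$ estimate; Step 1 finishes the proof.

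\textbf{Expected main obstacle.} The genuine difficulty is the degeneracy of $a_{ij}$: for $p\neq2$ the weight $\omega$ is not bounded above and below uniformly in $\varepsilon$, so Moser iteration does not apply to the gradient equation directly. Restricting the iteration to super-level sets of $|\nabla w_{\varepsilon}|$, where $\omega$ is trapped independently of $\varepsilon$, together with the careful bookkeeping of the forcing term (which must be integrated by parts since $f$ is only $L^{\infty}$), is the technical heart of the argument.
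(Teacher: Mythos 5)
The paper does not actually prove this proposition: it is quoted as a black box from Xiang \cite{Xiang17}, so there is no in-paper argument to compare against. Your outline follows the standard route of the cited literature (DiBenedetto--Tolksdorf--type gradient bounds): reduce to $\delta=1$ by scaling, regularize to the nondegenerate equation, differentiate, and run a De Giorgi--Moser iteration on $V=|\nabla w_\varepsilon|^2$. Steps 1 and 2 are correct as written, including the scaling bookkeeping that produces the exponents in (\ref{3.1a}) and the integration by parts that keeps only $\|f\|_\infty$ (never $\nabla f$) in the subsolution inequality for $V$.

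Two points in Step 3, which is the technical heart, need repair. First, your displayed intermediate estimate $\sup_{B_{1/2}}V\le C\int_{B_{3/4}}V^{p/2}+C\|f\|_{\infty}^{2/(p-1)}$ has the wrong homogeneity: under $w\mapsto\lambda w$, $f\mapsto\lambda^{p-1}f$, the left side scales like $\lambda^{2}$ while the first right-hand term scales like $\lambda^{p}$. The interpolation-across-radii argument you describe in words actually yields $C\bigl(\int_{B_{3/4}}V^{p/2}\bigr)^{2/p}$, and only with that exponent does taking square roots give the target $C\bigl(\int_{B_{1}}|\nabla w|^{p}\bigr)^{1/p}$. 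Second, ``iterate over super-level sets $\{V>\ell\}$ with $\ell$ comparable to $\sup V$'' is circular as stated and does not by itself control the remainder $\omega^{-1}f_\varepsilon^{2}=(\varepsilon+V)^{(2-p)/2}f_\varepsilon^{2}$ created when $f_\varepsilon\Delta w_\varepsilon$ is absorbed into the coercive second-derivative term; for $p>2$ this factor blows up exactly where $V$ is small. The working versions are either a dichotomy --- if $\sup V\le M\|f\|_{\infty}^{2/(p-1)}$ you are done, otherwise iterate only over levels $\ell\ge\|f\|_{\infty}^{2/(p-1)}$, on which $(\varepsilon+V)^{(2-p)/2}$ is controlled by a power of $\ell$ and the forcing is dominated by the level itself --- or the reformulation in terms of $G=(\varepsilon+V)^{p/2}$, whose equation has principal part $b_{ij}=\delta_{ij}+(p-2)u_iu_j/(\varepsilon+|\nabla w_\varepsilon|^{2})$, uniformly elliptic independently of $\varepsilon$. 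With either fix your outline closes and coincides with the argument in the cited source; as submitted, Step 3 is a sketch with one exponent error and one unjustified absorption rather than a complete proof.
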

 We shall use the Proposition \ref{pro3.1} and Proposition \ref{pro3.2} to get the following integral estimates (\ref{3.3}) when 
$k>2p$.
\begin{lemma}\label{lem3.3}
	Let $u$ be a solution of (\ref{1.2}), for any $z\in \mathbb{R}^{n-k}$, we have 
	\begin{align}\label{3.2}
		|\nabla u(y,z)|\leq \frac{C}{|y|} 
	\end{align}
	as $|y|\rightarrow 0$ .
	
	If $k>2p$, then for any $x\in \mathbb{R}^n$ and any $\gamma \in R$,	 we have the following integral estimates.
	\begin{equation}\label{3.3}
	\begin{aligned}
	\int_{B_R}|\nabla u|^{2p}u^{\gamma}dzdy\leq C(1+R^{n-\frac{2p(n-1)}{p-1}-\frac{\gamma(n-p)}{p-1}}).
		\end{aligned}
	\end{equation}	
	\end{lemma}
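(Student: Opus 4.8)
The plan is to prove the two assertions separately, using Proposition \ref{pro3.1} and Proposition \ref{pro3.2} as the only external inputs.

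\textbf{The pointwise gradient bound near $|y|=0$.} First I would fix $z\in\mathbb{R}^{n-k}$ and a point $(y,z)$ with $|y|$ small, say $|y|\le 1$, and apply Proposition \ref{pro3.2} with $w=u$, $f=u^{p^*(1)-1}/|y|$, $x_0=(y,z)$, and $\delta\sim|y|$ (precisely $\delta=\tfrac14|y|$ so that $B_{2\delta}(x_0)$ stays away from the singular set $\{y=0\}$ and $|y'|\sim|y|$ on $B_\delta(x_0)$). On that ball the $C^0$ bound \eqref{3.1} gives $u\le C(1+|x|)^{-\frac{n-p}{p-1}}\le C$, hence $\|f\|_{\infty,B_\delta(x_0)}\le C|y|^{-1}$, so the second term in \eqref{3.1a} is $\le C\,\delta^{\frac{1}{p-1}}|y|^{-\frac{1}{p-1}}\le C$. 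For the first term I need $\int_{B_\delta(x_0)}|\nabla u|^p$; here I would use a Caccioppoli inequality obtained by testing the equation against $\eta^p u$ with a cutoff $\eta$ supported in $B_{2\delta}(x_0)$, giving $\int_{B_\delta}|\nabla u|^p\le C\delta^{-p}\int_{B_{2\delta}}u^p+C\int_{B_{2\delta}}u^{p^*(1)}/|y|\le C\delta^{n}\delta^{-p}+C\delta^{n}\delta^{-1}\le C\delta^{n-p}$ (using $p>1$ and $\delta\le 1$). Then $\bigl(\delta^{-n}\int_{B_\delta}|\nabla u|^p\bigr)^{1/p}\le C\delta^{-1}\le C|y|^{-1}$, which yields \eqref{3.2}.

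\textbf{The integral estimate when $k>2p$.} For \eqref{3.3} I would cover $B_R$ by the two regions $\{|y|\le 1\}$ and $\{|y|\ge 1\}$ (a dyadic covering of $B_R$ by balls of radius comparable to the distance to $\{y=0\}$ is the cleaner way to organize this). On each such ball $B_\delta(x_0)$ with $\delta\sim|y_0|$, the local sup estimate from the previous paragraph gives $|\nabla u|\le C/|y|$ and, more generally, combining \eqref{3.1a} with the Caccioppoli bound and \eqref{3.1} we get $|\nabla u(x)|\le C\,(1+|x|)^{-\frac{n-p}{p-1}-1}\cdot(\text{correction near }y=0)$, i.e. $|\nabla u|\le C\min\{|y|^{-1},(1+|x|)^{-\frac{n-1}{p-1}}\}$. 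Raising to the power $2p$ and multiplying by $u^\gamma\le C(1+|x|)^{-\frac{\gamma(n-p)}{p-1}}$ (for $\gamma\ge0$; for $\gamma<0$ use the lower bound in \eqref{3.1} instead), the integrand near the singular set behaves like $|y|^{-2p}$, which is integrable in $y\in\mathbb{R}^k$ precisely because $k>2p$; the resulting $y$-integral is finite and the remaining factor in $|x|$ integrates to give the stated power $R^{\,n-\frac{2p(n-1)}{p-1}-\frac{\gamma(n-p)}{p-1}}$ (with the ``$1+$'' absorbing the bounded contribution from $|x|\lesssim 1$). I would carry this out by splitting $\int_{B_R}=\int_{B_R\cap\{|x|\le1\}}+\int_{B_R\cap\{|x|\ge1\}}$ and, on the second piece, integrating in spherical-type coordinates $(|y|,|z|)$ or just in $|x|$ after performing the $y$-angular integration.

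\textbf{Main obstacle.} The delicate point is making the local gradient estimate uniform up to the singular set $\{y=0\}$: Proposition \ref{pro3.2} requires $B_{2\delta}(x_0)\subset\Omega$ with $f\in L^\infty$ there, so one must choose $\delta$ proportional to $\operatorname{dist}(x_0,\{y=0\})=|y_0|$ and then track carefully how all constants scale in $\delta$ — in particular verifying that the Caccioppoli estimate produces exactly $\delta^{n-p}$ so that the rescaled sup bound is $O(|y_0|^{-1})$ and not worse. Once the scaling bookkeeping is done, the integrability is a clean consequence of the hypothesis $k>2p$, and the exponent of $R$ follows by a direct computation; so the real work is the uniform-in-$\delta$ interior estimate, everything else being routine.
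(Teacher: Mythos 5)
Your proof of \eqref{3.2} is essentially the paper's own argument: the paper likewise combines a Caccioppoli estimate (testing with $u\eta^p$ on a ball of radius $\delta\sim|y_0|$, using \eqref{3.1} to get $\int_{B_\delta}|\nabla u|^p\le C\delta^{n-p}$) with Proposition \ref{pro3.2}; the only cosmetic difference is that the paper first rescales $v=K^{\frac{n-p}{p-1}}u(K\cdot)$ so as to work on a fixed unit-size region. That half of your argument is correct.

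The gap is in the passage to \eqref{3.3}. The global pointwise bound you state, $|\nabla u|\le C\min\{|y|^{-1},(1+|x|)^{-\frac{n-1}{p-1}}\}$, is not what your local argument produces. Redoing your Caccioppoli computation on $B_\delta(x_0)$ with $\delta\sim|y_0|$ at a point with $|x_0|$ large, the decay $u\le C(1+|x_0|)^{-\frac{n-p}{p-1}}$ from \eqref{3.1} yields $|\nabla u(x_0)|\le C(1+|x_0|)^{-\frac{n-p}{p-1}}|y_0|^{-1}$ — a \emph{product}, which for $|y_0|\ll|x_0|$ is strictly weaker than the minimum you wrote (the minimum would assert $|\nabla u|\le C(1+|x|)^{-\frac{n-1}{p-1}}$ uniformly up to $\{y=0\}$, which nothing in your argument gives). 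This is not a harmless imprecision, because the exponent in \eqref{3.3} depends on it: if on $B_R\cap\{|y|\le1\}$ you use only $|\nabla u|\le C/|y|$ with no decay in $|x|$, you obtain at best $C\bigl(1+R^{\,n-k-\frac{\gamma(n-p)}{p-1}}\bigr)$, and since $k\le n-1<\frac{2p(n-1)}{p-1}$ this can grow in $R$ in regimes where the right-hand side of \eqref{3.3} is bounded; so the crude split $\{|y|\le1\}\cup\{|y|\ge1\}$ does not close. What does close is your parenthetical dyadic covering carried out with the product bound: on the annulus $|x|\sim\rho$ one integrates $|y|^{-2p}$ over $\{|y|\lesssim\rho\}\subset\mathbb{R}^k$ to get $\rho^{k-2p}$ (this is where $k>2p$ enters), and $\rho^{k-2p}\cdot\rho^{-\frac{2p(n-p)}{p-1}}=\rho^{k-\frac{2p(n-1)}{p-1}}$ then reproduces the stated power of $R$ after the $z$-integration. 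This is exactly what the paper's normalization $v=K^{\frac{n-p}{p-1}}u(K\cdot)$ on dyadic annuli does automatically, since the bound $|\nabla v|\le C/|y|$ on the unit annulus unscales to the product form; you should either adopt that device or state and prove the bound $|\nabla u|\le C(1+|x|)^{-\frac{n-p}{p-1}}|y|^{-1}$ explicitly before integrating.
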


		\begin{proof}
	For $K\geq1$ , we define $v(y,z)=K^{\frac{n-p}{p-1}}u(Ky,Kz)$, then $v\in L^{\infty}(\mathbb{R}^n\backslash B_1(0))$ independent of $K$ by Proposition \ref{pro3.2} and $v$ satisfies
	\begin{equation}\label{3.4}
		-\Delta_pv=\frac{v^{\frac{n(p-1)}{n-p}}}{K|y|}\, \,\textrm{in} \,\{1<|y|<4\}\times\{|z|<2\}.
	\end{equation}
	 By standard regularity of $p$-Laplace equation \cite{DiB83} or \cite{Tol84}, we have
\begin{align*}
	|\nabla v|\leq C \,\,\textrm{in} \,\{2<|y|<3\}\times \{|z|<1\},
\end{align*}
where $C$ is independent of $K$, which is 
\begin{align*}
	|\nabla u(Ky, Kz)|\leq C K^{-\frac{n-1}{p-1}}\,\,\textrm{in} \,\{2<|y|<3\}\times \{|z|<1\}.
\end{align*}
And since
\begin{align*}
	2K\leq|(Ky,Kz)|\leq \sqrt{10}K,
\end{align*}
in other word,
\begin{align*}
	|\nabla u(x)|\leq \frac{C}{1+|x|^{\frac{n-1}{p-1}}},
	\end{align*}
	for $|y|>2$.

For $0<|y_0|\leq 2$, we consider (\ref{3.4}) in $\,\{|y|<3\}\times\{1<|z|<4\}.
$ At $x_0=(y_0,z_0) $, to use Proposition \ref{pro3.2} we take $\delta=\frac{|y_0|}{8}$.	
  By scaling the standard cut-off function, we have a smooth function $\eta(x)$ satisfies $\eta=1$ in $B_\delta(x_0)$, $0\leq\eta\leq 1$, $\eta=0$ outside $B_{2\delta}(x_0)$ and $|\nabla \eta|\leq \frac{C}{\delta}$. By testing (\ref{3.4}) with 
  \begin{align*}
  	\psi=v\eta^p.
  \end{align*}
  
We have
\begin{equation*}
	\int_{B_{2\delta}(x_0)}|\nabla v|^p\eta^pdx\leq \int_{B_{2\delta}(x_0)}\frac{v^{\frac{p(n-1)}{n-p}}\eta^p}{|y|}+p\int_{B_{2\delta(x_0)}}|\nabla v|^{p-1}\eta^{p-1}|\nabla \eta|v.
\end{equation*}
By Young's inequality, we arrive at
\begin{align*}
	\int_{B_{2\delta}(x_0)}|\nabla v|^p\eta^pdx\leq C\left[\int_{B_{2\delta}(x_0)}\frac{v^{\frac{p(n-1)}{n-p}}\eta^p}{|y|}+\int_{B_{2\delta}(x_0)}v^{p}|\nabla\eta|^{p}\right]\leq C\delta^{n-p},
	\end{align*}
Then from the estimates (\ref{3.1}) and (\ref{3.1a}), we have
\begin{align*}
	\sup_{B_{\frac{\delta}{2}}(x_0)}|\nabla v|\leq C \delta^{-1},
\end{align*}
which implies 
\begin{align}\label{3.4a}
	|\nabla v(y_0,z_0)|\leq \frac{C}{|y_0|}.
\end{align}
Let $K=1$, which is
\begin{align*}
	|\nabla u(y,z)|\leq \frac{C}{|y|}, \quad\textrm{as}\,|y|\rightarrow 0.
\end{align*}

From (\ref{3.4a}), we get
\begin{align*}
	|\nabla v|^{2p} \leq \frac{1}{|y|^{2p}},\,\,\textrm{if}\,\, k>2p.
\end{align*}
We combine it with the estimates  (\ref{3.1}), for any $\gamma \in R$, and $k>2p$, we have
\begin{align*}
	\int_{B_2\backslash B_1}|\nabla v|^{2p}v^{\gamma}\leq C,
\end{align*}
up to scaling which is 
\begin{align*}
	\int_{B_{2K}\backslash B_{K}}|\nabla u|^{2p}u^{\gamma}dx\leq CK^{n-\frac{2p(n-1)}{p-1}-\gamma\frac{n-p}{p-1}}.
\end{align*}
Let $K=1$, arguing as above in $\,\{|y|<3\}\times\{|z|<4\}.$ we get
\begin{align*}
	\int_{B_1}|\nabla u|^{2p}u^{\gamma}dx\leq C.
\end{align*}
Combining these, it follows that
\begin{align}\label{3.4b}
	\int_{B_{R}}|\nabla u|^{2p}u^{\gamma}dx\leq C(1+R^{n-\frac{2p(n-1)}{p-1}-\gamma\frac{n-p}{p-1}}).
\end{align}
\end{proof}

For the other case $p\geq \frac{k}{2}\geq\frac{3}{2}$, we shall use the technique from   Antonini-Ciraolo-Farina\cite{ACF}    to get the integral gradient estimates.
\begin{lemma}\label{lem3.4}
Let $u$ be a solution of (\ref{1.2}), 	for $p\geq \frac{k}{2}\geq\frac{3}{2}$, we have $|\nabla u|^q\in L^1_{loc}$, and  for any $q>1, \gamma\in R$, we have the following estimates.
	\begin{align}\label{3.4c}
		\int_{B_R}|\nabla u|^q u^\gamma dx\leq C(1+R^{n-\frac{q(n-1)}{p-1}-\gamma\frac{n-p}{p-1}}).
	\end{align}
\end{lemma}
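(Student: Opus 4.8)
The plan is to reproduce the scaling scheme used in the proof of Lemma~\ref{lem3.3}, replacing the pointwise control of $|\nabla u|$ near $\{y=0\}$ — which is no longer summable to the power $q$ once $q\ge k$ — by an integral gradient bound extracted from the differentiated equation, following Antonini-Ciraolo-Farina~\cite{ACF}. For $K\ge 1$ set $v=v_K(y,z)=K^{\frac{n-p}{p-1}}u(Ky,Kz)$, so that
\begin{equation*}
-\Delta_p v=\frac{v^{\frac{n(p-1)}{n-p}}}{K|y|}\qquad\text{in }\mathbb R^n ,
\end{equation*}
and by Proposition~\ref{pro3.1} there are constants $0<c'\le C'$, independent of $K$, with $c'\le v\le C'$ on $B_2\setminus B_1$; in particular $v^\gamma$ is comparable to $1$ there. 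It therefore suffices to establish the scale-invariant unit estimate
\begin{equation*}
\int_{B_2\setminus B_1}|\nabla v_K|^{q}\,dx\le C\qquad\text{for each fixed }q>1,\ \text{uniformly in }K\ge 1 .
\end{equation*}
Scaling back, this gives $\int_{B_{2K}\setminus B_K}|\nabla u|^{q}u^{\gamma}\,dx\le CK^{\,n-\frac{q(n-1)}{p-1}-\gamma\frac{n-p}{p-1}}$, and summing over the dyadic annuli $B_{2^{j+1}}\setminus B_{2^{j}}$, $j\ge 0$, together with the $K=1$ case applied on an enlarged box (exactly as in Lemma~\ref{lem3.3}) to absorb $B_1$, yields (\ref{3.4c}); the same argument, run with truncations of $|\nabla v_K|$, also delivers the qualitative statement $|\nabla u|^q\in L^1_{loc}$.

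On the part of $B_2\setminus B_1$ where $|y|\ge\rho$ the source $\frac{v_K^{n(p-1)/(n-p)}}{K|y|}$ is bounded uniformly in $K\ge 1$, so the standard local gradient estimates for the $p$-Laplacian~\cite{DiB83,Tol84} give $|\nabla v_K|\le C(\rho)$ there. The whole matter therefore reduces to the uniform bound $\int_{(B_2\setminus B_1)\cap\{|y|<\rho\}}|\nabla v_K|^{q}\,dx\le C$ near the degenerate set $\{y=0\}$, where elliptic regularity is not available because the coefficient $\tfrac{1}{K|y|}$ in the source is singular.

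Here the technique of~\cite{ACF} comes in. Write $g=g_K=\frac{v_K^{n(p-1)/(n-p)}}{K|y|}$; then $|g|\lesssim|y|^{-1}$ and, using the pointwise bound $|\nabla v_K|\lesssim|y|^{-1}$ from (\ref{3.4a}), $|\nabla g|\lesssim|y|^{-2}$ near $\{y=0\}$. Differentiating the equation (in the weak sense, via difference quotients), $\nabla v_K$ is a weak solution of the linear system $-\partial_j\big(a_{ij}(\nabla v_K)\,\partial_i\partial_m v_K\big)=\partial_m g$ with $a_{ij}(\xi)=|\xi|^{p-2}\big(\delta_{ij}+(p-2)\xi_i\xi_j|\xi|^{-2}\big)$, which is elliptic away from $\xi=0$. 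After the customary regularization (replacing $|\nabla v_K|^2$ by $|\nabla v_K|^2+\varepsilon$) to handle the degeneracy at critical points of $v_K$, one tests this system against functions of the form $(|\nabla v_K|^2+\varepsilon)^{s}\,\partial_m v_K\,\eta^2$, integrates by parts, uses the ellipticity and Young's inequality, and lets $\varepsilon\to 0$; this produces a Caccioppoli-type inequality for $|\nabla v_K|^{(p+2s)/2}$ whose right-hand side involves only integrals of $|g|$ and $|\nabla g|$ against powers of $|\nabla v_K|$ and $|\nabla^2 v_K|$. A Moser/Sobolev iteration in the parameter $s$, started from the a priori $W^{1,p}$ bound on $B_2\setminus B_1$ (itself uniform in $K$ since $|y|^{-1}\in L^1_{loc}(\mathbb R^k)$), then raises the integrability of $|\nabla v_K|$ to every finite exponent, with constants independent of $K$. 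The hypothesis $p\ge\frac k2$ enters precisely here: it makes $|y|^{-1}$ and $|y|^{-2}$ locally integrable over $\mathbb R^k$ up to the exponents $\sigma<k/2\le p$ that the iteration requires in order to absorb the singular contributions of $g$ and $\nabla g$, while $p\ge\frac32$ (equivalently $k\ge 3$) keeps the exponents produced along the iteration in the admissible range.

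The main obstacle is this last step: carrying out the Antonini-Ciraolo-Farina differentiated-equation iteration \emph{uniformly in $K$} in a situation that is simultaneously degenerate — the $p$-Laplacian at the critical points of $v_K$, which forces the $\varepsilon$-regularization and a careful passage to the limit — and singular, since $g_K$ and $\nabla g_K$ both blow up along the lower-dimensional set $\{y=0\}$. One must track the precise exponents throughout so that every singular integral generated by the iteration remains finite; this is exactly where the borderline condition $p\ge k/2$ is needed, and it is also why, in order to reach the qualitative conclusion $|\nabla u|^q\in L^1_{loc}$, each level of the iteration must first be justified on truncations of $|\nabla v_K|$ and only afterwards passed to the limit.
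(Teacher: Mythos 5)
Your proposal follows essentially the same route as the paper: the $\varepsilon$-regularization of the $p$-Laplacian, differentiation of the equation, testing with $(\varepsilon+|\nabla u_\varepsilon|^2)^{\alpha/2}\partial_m u_\varepsilon\,\eta^2$, a Caccioppoli inequality whose right-hand side carries the singular weight $|y|^{-2}$, and a Sobolev/Hardy iteration in the exponent, finished by Fatou's lemma and scaling; your reorganization via the rescaled functions $v_K$ on dyadic annuli is cosmetic, since the paper proves the unit-scale bound on $B_1$ and then rescales. The one inaccuracy is your account of where the hypotheses enter: in the paper $k\ge 3$ is what makes $|y|^{-2}$ (hence $f$ and the singular term in the Caccioppoli inequality) locally integrable, while $p\ge \tfrac32$ is used to keep the quadratic form produced by the test function nonnegative when $p<2$ (absorbing the terms proportional to $p-2$), not to keep the exponents of the iteration in an admissible range.
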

\begin{proof}
	We show it by iteration. As Lemma 3.1 in \cite{ACF}, we
	consider the following equation.
	\begin{equation}
	\begin{aligned}\label{3.4d}
		div((\varepsilon+|\nabla u_\varepsilon|^2)^\frac{p-2}{2}\nabla u_{\varepsilon})&=f_\varepsilon \,\,\textrm{in}\, B_1,\\	u_\varepsilon  &=u\,\,\textrm{on}\, \partial B_1,
		\end{aligned}
	\end{equation}
	where the  boundary condition is to be intended as 
	\begin{align*}
		u_\varepsilon -u\in W^{1,p}_0(B_1).
			\end{align*}
	Here $f=\frac{u^{\frac{p(n-1)}{n-p}}}{|y|}\in L^{k-\nu}(B_1)$ for any $k>\nu>0$. Let $f_\varepsilon=\min\{f,\varepsilon^{-1}\}$, $f_\varepsilon \in L^\infty(B_1), |f_\varepsilon|\leq |f|$. Then $f_\varepsilon \rightarrow f $ in $L^\sigma(B_1)$, where
	\begin{align*}
		\sigma &=2, \textrm{if}\,\, p\geq \frac{2n}{n+2},\\\sigma &=\frac{np}{np-n+p}, \textrm {if}\,\, \frac{k}{2}\leq p<\frac{2n}{n+2},
	\end{align*}
	since $\frac{np}{np-n+p}<\frac{np}{np-n}=\frac{p}{p-1}\leq 3.$ 
	
	We apply [\cite{ACF} Lemma 3.1] to our case, we have for $\varepsilon\in [0,1)$, there is a unique solution in $W^{1,p}_u(B_1)= u+W^{1,p}_0(B_1)$ and 
	\begin{align*}
		u_\varepsilon\rightarrow u \,\,\textrm{strongly in}\, W^{1,p}(B_1), 
	\end{align*}
	up to a subsequence, we have $u_\varepsilon\rightarrow u, \nabla u_{\varepsilon}\rightarrow \nabla u$ a.e.

	Anagin by [\cite{ACF}
	Lemma 3.2], we also have
	\begin{align}\label{3.7}
		u_\varepsilon\in H^2_{loc}(B_1)\cap C^1(B_1),
	\end{align}
	and 
	\begin{align}\label{3.8}
		|\nabla u_\varepsilon|^{p-2}\nabla u_\varepsilon\in H^{1}_{loc}(B_1;\mathbb{R}^n).
	\end{align}
From (\ref{3.4d}), it follows that
	\begin{align*}
		-\partial_i(A^{ij}(\nabla u_\varepsilon)\partial_{jm} u_\varepsilon)=\partial_m f_\varepsilon
	\end{align*}
	where $A^{ij}=(\varepsilon+|\nabla u_\varepsilon|^2)^\frac{p-2}{2}\delta_{ij}+(p-2)(\varepsilon+|\nabla u_\varepsilon|^2)^\frac{p-4}{2}\partial_i u_\varepsilon\partial_j u_\varepsilon$.

The above equation is understood in the sense that for all $\psi\in C_0^\infty(B_1)$, 
\begin{align}\label{3.8a}
	\int_{B_1} A^{ij}(\nabla u_\varepsilon)\partial_{jm}u_{\varepsilon}\partial_i\psi =\int_{B_1} \partial_m f_\varepsilon \psi.
\end{align}

In (\ref{3.8a}), we shall set
$\psi =w^\frac{\alpha}{2}\partial_m u_\varepsilon \eta^2$, where $\eta\in C^\infty_0(B_1)$, $\alpha\geq 0$ and $w=\varepsilon+|\nabla u_\varepsilon|^2$.

From the regularity of $u_\varepsilon$ (\ref{3.7}) and (\ref{3.8}),   the above equality (\ref{3.8a}) also holds for $\psi$. First we have 
 \begin{align}
	\partial_i\psi=w^\frac{\alpha}{2}\partial_{im} u_\varepsilon \eta^2+\frac{\alpha}{2}w^{\frac{\alpha}{2}-1}\partial_i w\partial_m u_{\varepsilon} \eta^2+2 w^\frac{\alpha}{2}\partial_m u_\varepsilon \eta\partial_i \eta.
\end{align}

Since  $|\partial_m f_\varepsilon|\leq |\partial_m f|$,
 and 
\begin{align}
	\partial_m f=\frac{p(n-1)}{n-p}\frac{u^{\frac{n(p-1)}{n-p}}}{|y|}\partial_m u+u^\frac{p(n-1)}{n-p}\partial_m\bigg(\frac{1}{|y|}\bigg).
\end{align}
We know  $u$ is bounded from  (\ref{3.1}), $|\partial_m u|\leq \frac{C}{|y|}$ from  (\ref{3.2}) and $|\partial_m(|y|^{-1})|\leq C|y|^{-2}$.
It follows that 
 \begin{align}
 	|\partial_m f_\varepsilon\psi|\leq C\frac{w^\frac{\alpha+1}{2}}{|y|^2}.
 \end{align}

From [\cite{ACF} Proposition 4.3], for any $\varepsilon\in (0,1)$ and for any open ball $B_{2r}\subset B_1$, we have
\begin{align}\label{3.13}
	\int_{B_r} ||\nabla u_\varepsilon|^{p-2}\nabla u_\varepsilon|^2dx\leq C\bigg[r^{-n}\bigg(\int_{B_{2r}\backslash B_r}|\nabla u_\varepsilon|^{p-1}\bigg)^2+r^2\int_{B_{2r}}f_\varepsilon^2\bigg],
\end{align}
\begin{align}\label{3.14}
	\int_{B_\frac{r}{2}}|\nabla(|\nabla u_\varepsilon|^{p-2}\nabla u_\varepsilon)|^2dx\leq C\bigg[r^{-n-2}\bigg(\int_{B_{2r}\backslash B_r}|\nabla u_\varepsilon|^{p-1}dx\bigg)^2+\int_{B_{2r}}f_\varepsilon^2dx\bigg],
\end{align}
where $C$ is only depending on $n,p$.

Since $k\geq 3$, $f\in L^2_{loc}$ and $\nabla u_\varepsilon\rightarrow\nabla u$ in $L^p$, so  from (\ref{3.13}), (\ref{3.14}) we have

\begin{align*}
	|||\nabla u_\varepsilon |^{p-2}\nabla u_\varepsilon||_{H^{1}_{loc}}\leq C,
\end{align*}
 $C$ is independent of $\varepsilon$.
Therefore, by Hardy-Sobolev-Maz'ya inequalities (\ref{1.1}) with $p=2$ and $s=0$ or $s=2$ to $|\nabla u_\varepsilon |^{p-2}\nabla u_{\varepsilon}$, we get
\begin{align*}
	|\nabla u_\varepsilon|^{p-2}\nabla u_\varepsilon\in L^\frac{2n}{n-2}_{loc}\cap L^\frac{p}{p-1}_{loc} ,\frac{|\nabla u_\varepsilon|^{2p-2}}{|y|^2}\in L^1_{loc},
\end{align*}
then
\begin{align}\label{3.15}
	w\in L^\frac{n(p-1)}{n-2}_{loc}\cap L^\frac{p}{2}_{loc}, \frac{w^{p-1}}{|y|^2}\in L^1_{loc},
\end{align}
and the norms are independent of $\varepsilon.$

Summing up all $m=1,...,n,$ we obtain
\begin{align*}
	&A^{ij}(\nabla u_\varepsilon)\partial_{jm} u_\varepsilon \partial_i\psi\\=& w^\frac{\alpha+p-2}{2}|\partial_{im}u_\varepsilon|^2\eta^2+(p-2)w^\frac{\alpha+p-4}{2}(\partial_{im}u_\varepsilon\partial_{i}u_\varepsilon)^2\eta^2\\&+\frac{\alpha}{2}w^\frac{\alpha+p-4}{2}\partial_i w\partial_m u_\varepsilon\partial_{im} u_{\varepsilon}\eta^2+\frac{\alpha(p-2)}{2}w^\frac{\alpha+p-6}{2}\partial_{jm} u_\varepsilon\partial_i w\partial_m u_\varepsilon\partial_i u_\varepsilon\partial_j u_\varepsilon\eta^2\\&+2w^\frac{\alpha+p-2}{2}\partial_{im}u_\varepsilon\partial_m u_\varepsilon\eta\partial_i\eta+2(p-2)w^\frac{\alpha+p-4}{2}\partial_{jm}u_\varepsilon\partial_m u_\varepsilon\eta \partial_i \eta \partial_i u_\varepsilon\partial_j u_\varepsilon
	\\=&w^\frac{\alpha+p-2}{2}|\nabla^2 u_\varepsilon|^2\eta^2+\frac{\alpha+p-2}{4}w^\frac{\alpha+p-4}{2}|\nabla w|^2\eta^2+\frac{\alpha(p-2)}{4}w^\frac{\alpha+p-6}{4}|\nabla w\cdot\nabla u_\varepsilon|^2\eta^2\\&+2w^\frac{\alpha+p-2}{2}\partial_{im}u_\varepsilon\partial_m u_\varepsilon\eta\partial_i\eta+2(p-2)w^\frac{\alpha+p-4}{2}\partial_{jm}u_\varepsilon\partial_m u_\varepsilon\eta \partial_i \eta \partial_i u_\varepsilon\partial_j u_\varepsilon.
\end{align*}
Since $\partial_i w=2\partial_j u_\varepsilon\partial_{ij} u_\varepsilon$, $|\nabla w|^2=4|\partial_j u_\varepsilon\partial_{ij} u_\varepsilon|^2\leq 4|\nabla u_\varepsilon |^2|\nabla^2 u_\varepsilon |^2\leq 4w|\nabla^2u_\varepsilon|^2$. And  for $\alpha\geq 0, p\geq\frac{3}{2}$, using Cauchy inequality we get

\begin{align*}
	\frac{\alpha}{8}w^{\frac{\alpha+p-4}{2}}|\nabla w|^2\eta^2+\frac{\alpha(p-2)}{4}w^\frac{\alpha+p-4}{2}|\nabla w\cdot \nabla u_\varepsilon|^2\eta^2\geq\frac{\alpha(p-\frac{3}{2})}{4}|\nabla w\cdot \nabla u_\varepsilon|^2\eta^2\geq 0,
	\end{align*}
and 
	\begin{align*}
		\frac{1}{2}w^\frac{\alpha+p-2}{2}|\nabla^2 u_\varepsilon|^2\eta^2+\frac{p-2}{4}w^\frac{\alpha+p-4}{2}|\nabla w|^2\eta^2\geq \frac{p-\frac{3}{2}}{4}w^\frac{\alpha+p-4}{2}|\nabla w|^2\eta^2\geq 0.
			\end{align*}

  It follows that there exist $c_1, c_2>0$ depend on $p$ such that 
\begin{align}
	\Sigma_{m=1}^n A^{ij}(\nabla u_\varepsilon)\partial_{jm}u_\varepsilon\partial_i\psi \geq c_1 w^{\frac{\alpha+p-2}{2}}|\nabla^2 u_\varepsilon|^2\eta^2+c_2w^\frac{\alpha+p-4}{2}|\nabla w|^2\eta^2-Cw^\frac{\alpha+p}{2}|\nabla \eta|^2
\end{align}
Then we get
\begin{align}
	\int_{B_1}|\nabla (\eta w^\frac{\alpha+p}{4})|^2\leq C\bigg(\int_{B_1}w^\frac{\alpha+p}{2}|\nabla\eta|^2+\int_{B_1}\frac{w^\frac{\alpha+1}{2}}{|y|^2}\bigg)
\end{align}

For $\alpha_1=0$, then from (\ref{3.15}), $w^\frac{p}{2}, \frac{w^\frac{1}{2}}{|y|^2}\in L^1(B_{\frac{3}{4}})$, applying (\ref{1.1}) with $p=2, s=2$ to $\eta w^\frac{p}{4}$, we have
\begin{align*}
	\frac{w^\frac{p}{2}}{|y|^2}\in L^1(B_{\frac{1}{2}}),
\end{align*}
and applying $(\ref{1.1})$ with $p=2, s=0$ to $\eta w^\frac{p}{4}$, we also have 
\begin{align*}
	w^\frac{np}{2(n-2)}\in L^1(B_{\frac{1}{2}}).
\end{align*}
Then if $w^\frac{\alpha+p}{2}, \frac{w^\frac{\alpha+1}{2}}{|y|^2}\in L^1(B_r)$, we have $w^\frac{(\alpha+p)n}{2(n-2)},\frac{w^\frac{\alpha+p}{2}}{|y|^2}\in L^1(B_{\sigma r})$, where  $r, \sigma<1$.

Let $\alpha_{i+1}=\min\{\frac{n\alpha_i}{n-2}+\frac{2p}{n-2},\alpha_i+p-1\}$, then
\begin{align}
	\alpha_{i+1}-\alpha_i\geq \min\left\{\frac{2p}{n-2},p-1\right\}
\end{align}
Then $\alpha_i\rightarrow \infty$ as $i\rightarrow\infty$, after scaling we have $w^\frac{q}{2}\in L^1(B_1)$ for any $q>1$.

By Fatou Lemma,

\begin{align*}
	\int_{B_1}|\nabla u|^q u^\gamma =\int _{B_1} \lim_{\varepsilon\rightarrow 0}(|\nabla u_\varepsilon|^2+\varepsilon)^\frac{q}{2}u^\gamma\leq \lim_{\varepsilon\rightarrow 0}\int_{B_1} w^\frac{q}{2}u^\gamma<C.
\end{align*}

A scaling argument completes our proof.

\end{proof}
From the second estimates for the $p$-Laplace equation in \cite{AKM18} and \cite{CM18}, we have the following regularity result.
\begin{lemma}\label{lem3.5}
	Let $u$ be a solution of (\ref{1.2}), then for $k\geq 3$, $|\nabla u|^{p-2}\nabla u\in W^{1,2}_{loc}(\mathbb{R}^n)$.
\end{lemma}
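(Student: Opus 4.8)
The plan is to obtain the second-derivative regularity $|\nabla u|^{p-2}\nabla u\in W^{1,2}_{loc}(\mathbb R^n)$ by invoking the now-standard second-order Calderón–Zygmund theory for the $p$-Laplacian developed in \cite{AKM18} and \cite{CM18}, whose hypothesis is a local $L^2$-integrability (in fact, slightly better) of the right-hand side $f$ of the equation $-\Delta_p u=f$. So the whole argument reduces to checking that $f(x)=u(x)^{p^*(1)-1}/|y|$ lies in $L^2_{loc}(\mathbb R^n)$ — indeed, that it lies in $L^2$ on every ball, with the singular set $\{y=0\}$ of codimension $k\ge 3$ being exactly what makes this work.

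First I would fix an arbitrary ball $B_R\subset\mathbb R^n$ and estimate $\int_{B_R} f^2\,dx$. By the $C^0$ bound in Proposition \ref{pro3.1}, $u$ is bounded on $B_R$, so $u^{p^*(1)-1}$ is bounded there and it suffices to control $\int_{B_R}|y|^{-2}\,dx$. Writing $x=(y,z)\in\mathbb R^k\times\mathbb R^{n-k}$ and using Fubini, this integral is comparable to $\big(\int_{\{|z|\le R\}}dz\big)\cdot\int_{\{|y|\le R\}}|y|^{-2}\,dy$, and the $y$-integral converges precisely because $k\ge 3 > 2$; explicitly $\int_{\{|y|\le R\}}|y|^{-2}dy = c_k\int_0^R \rho^{k-3}\,d\rho<\infty$. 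Hence $f\in L^2(B_R)$ for every $R$, i.e. $f\in L^2_{loc}(\mathbb R^n)$. (If the cited theorems require an $L^{2+\delta}$ or Lorentz-type hypothesis, the same computation with $|y|^{-2(1+\delta)}$ still converges for small $\delta>0$ when $k\ge 3$, since $k-1-2\delta>0$; one could also phrase it via $f\in L^{k-\nu}_{loc}$ as already noted in the proof of Lemma \ref{lem3.4}.)

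With $f\in L^2_{loc}$ in hand, I would apply the second-order regularity result of \cite{AKM18}, \cite{CM18}: any weak solution $w\in W^{1,p}_{loc}$ of $-\Delta_p w=f$ with $f\in L^2_{loc}$ satisfies $|\nabla w|^{p-2}\nabla w\in W^{1,2}_{loc}$, together with a local estimate of the form
\begin{align*}
\big\||\nabla w|^{p-2}\nabla w\big\|_{W^{1,2}(B_{r})}\le C\Big(\|\nabla w\|_{L^p(B_{2r})}+\|f\|_{L^2(B_{2r})}\Big)
\end{align*}
on balls $B_{2r}\subset\mathbb R^n$; applying this to $w=u$ on a covering of $\mathbb R^n$ by such balls gives the claim. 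The only point that needs a word of care — and the main (mild) obstacle — is that $f$ is genuinely singular along $\{y=0\}$, so the cited theorems must be applicable on balls centered on this set; since those results only require $f\in L^2_{loc}(\mathbb R^n)$ as a whole (with no pointwise or continuity assumption on $f$), and we have just verified $f\in L^2(B_R)$ for all $R$ including balls meeting $\{y=0\}$, there is no actual difficulty. A scaling/covering argument then upgrades the ball estimates to the stated local statement on all of $\mathbb R^n$.
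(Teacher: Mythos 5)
Your proposal is correct and follows essentially the same route as the paper: the paper's proof likewise consists of observing that $k\ge 3$ makes $u^{p^*(1)-1}/|y|\in L^2_{loc}(\mathbb{R}^n)$ (via the boundedness of $u$ and the convergence of $\int_{|y|\le R}|y|^{-2}\,dy$ in $\mathbb{R}^k$) and then invoking the second-order estimates of \cite{AKM18} and \cite{CM18}. You have simply written out the integrability check that the paper leaves implicit.
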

\begin{proof}
	For $k\geq 3$, we have $\frac{u^{p*(1)-1}}{|y|}\in L^2_{loc}(\mathbb{R}^n)$. Using  the standard result in \cite{AKM18} or \cite{CM18}, we have done.

\end{proof}

\begin{lemma}
	Let $u$ be a solution of (\ref{1.2}), and  $Z:=\{x\in \mathbb{R}^n:\nabla u(x) =0\}$ and $W_\varepsilon:=\{x\in \mathbb{R}^n:|y|>\varepsilon\}$, then $|Z\cap W_\varepsilon|=0$ and $u\in C^{\infty}( W_\varepsilon\backslash Z)$.
\end{lemma}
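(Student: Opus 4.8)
The plan is to prove the two assertions separately: the measure estimate by exploiting the divergence structure together with the second-order regularity from Lemma \ref{lem3.5}, and the smoothness by non-degenerate elliptic bootstrapping. For $|Z\cap W_\varepsilon|=0$ I would argue by contradiction. Write $V:=|\nabla u|^{p-2}\nabla u$; since $|\xi|^{p-1}\to 0$ as $\xi\to 0$ one has $|V|=|\nabla u|^{p-1}$, so $Z=\{V=0\}$. By Lemma \ref{lem3.5}, $V\in W^{1,2}_{\mathrm{loc}}(\mathbb{R}^n;\mathbb{R}^n)$, hence $\sum_i\partial_i V^i\in L^2_{\mathrm{loc}}$ as a genuine function; since $k\ge 3$ the datum $f:=u^{p^*(1)-1}/|y|$ lies in $L^2_{\mathrm{loc}}$, so the weak formulation of (\ref{1.2}) identifies the distributional divergence of $V$ with $-f$, whence $\operatorname{div}V=-f$ a.e.\ in $\mathbb{R}^n$. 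On the other hand, applying the classical level-set lemma of Stampacchia (e.g.\ Gilbarg--Trudinger, Lemma 7.7) to each scalar component of $V$, the gradient of a $W^{1,2}_{\mathrm{loc}}$ function vanishes a.e.\ on any of its level sets, so $\nabla V=0$, and a fortiori $\operatorname{div}V=0$, a.e.\ on $\{V=0\}=Z$. Comparing the two relations on $Z\cap W_\varepsilon$ forces $f=0$ a.e.\ there; but $u>0$ and $|y|>\varepsilon$ on $W_\varepsilon$, so $f>0$, and therefore $|Z\cap W_\varepsilon|=0$.

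For the smoothness I would localise away from the degeneracy set. On $W_\varepsilon$ the datum $f$ is bounded on compact subsets (by Proposition \ref{pro3.1} and $|y|>\varepsilon$), so the $C^{1,\alpha}$ theory for the $p$-Laplacian (\cite{DiB83}, \cite{Tol84}) gives $u\in C^{1,\alpha}_{\mathrm{loc}}(W_\varepsilon)$; in particular $\nabla u$ is continuous on $W_\varepsilon$ and $W_\varepsilon\setminus Z$ is open. Fix $x_0\in W_\varepsilon\setminus Z$ and a ball $B$ with $\overline{B}\subset W_\varepsilon\setminus Z$; on $B$ one has $|\nabla u|\ge c>0$, so the equation recasts in non-divergence form $a^{ij}(\nabla u)\,D_{ij}u=-f$ with $a^{ij}(\xi)=|\xi|^{p-2}\bigl(\delta_{ij}+(p-2)\xi_i\xi_j/|\xi|^2\bigr)$, a matrix uniformly elliptic and $C^\infty$ in $\xi$ on $\{|\xi|\ge c\}$. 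As $u\in C^{1,\alpha}(B)$, both $a^{ij}(\nabla u)$ and $f$ are $C^{0,\alpha}(B)$, so interior Schauder estimates upgrade $u$ to $C^{2,\alpha}_{\mathrm{loc}}(B)$; iterating, each new derivative improves the coefficients and the right-hand side by one order, using that $t\mapsto t^{p^*(1)-1}$ is smooth on $(0,\infty)$ and $1/|y|$ is smooth on $W_\varepsilon$, and one obtains $u\in C^\infty$ near $x_0$. Since $x_0$ is arbitrary, $u\in C^\infty(W_\varepsilon\setminus Z)$ (in fact real-analytic there, by Morrey's theorem, though this is not needed).

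I expect the second paragraph to be essentially routine. The main obstacle is the first step: the argument is legitimate only because Lemma \ref{lem3.5} promotes $|\nabla u|^{p-2}\nabla u$ into $W^{1,2}_{\mathrm{loc}}$, which is exactly what permits one to (i) treat $\operatorname{div}V$ as an $L^2_{\mathrm{loc}}$ function, (ii) identify it a.e.\ with the distributional $p$-Laplacian (here $k\ge 3$ re-enters, to put $f$ in $L^2_{\mathrm{loc}}$), and (iii) apply the Stampacchia level-set lemma componentwise. Without this second-order information none of these steps is available, so the measure-zero conclusion would be out of reach; this is the point that must be handled with care.
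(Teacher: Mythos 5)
Your proposal is correct and follows essentially the same route as the paper: the paper simply cites \cite{ACF} for the $C^{1,\beta}$ regularity and for the fact that $Z$ is null because the right-hand side $u^{p^*(1)-1}/|y|$ is strictly positive, and then invokes a bootstrap for smoothness away from $Z$. Your write-up merely makes explicit the mechanism behind the measure-zero claim (Lemma \ref{lem3.5} puts $|\nabla u|^{p-2}\nabla u$ in $W^{1,2}_{\mathrm{loc}}$, Stampacchia's lemma kills its divergence a.e.\ on $Z$, contradicting $\operatorname{div}V=-f<0$), which is exactly the argument the paper is implicitly relying on.
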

\begin{proof}
	We have $\frac{u^{p*(1)-1}}{|y|}\in L^\infty(W_\varepsilon)$, so from \cite{ACF}, $u\in C^{1,\beta}(W_\varepsilon)$ and $Z$ has zero measure since $\frac{u^{p*(1)-1}}{|y|}>0$. By a bootstrap  argument, $ u\in C^\infty(W_\varepsilon\backslash Z)$.

	\end{proof}

\section{Classification of cylindrically symmetric solutions}
In this section, we prove Theorem \ref{them1.4}. First, we derive an identity.

Let $u$ be a cylindrically symmetric solution of $(\ref{1.2})$, $r=|y|,t=|z|$ and we define
\begin{align*}
	u(y,z)=\kappa\phi(r,t)^{-\frac{n-p}{p}},
\end{align*}
where $\kappa$ is constant to be determined later. Then $\phi$ shares the regularities with $u$. In the coordinate $r, s$, then
\begin{equation}\label{4.0}
	\Delta_p u= (p-2)|\nabla u|^{p-4}[u_r^2 u_{rr} + 2 u_ru_su_{rs}+ u_s^2u_{ss}]\\
	+ |\nabla u|^{p-2}[u_{rr} + \frac{k-1}{r}u_r + u_{ss}^2 + \frac{n-k-1}{s}u_s].
\end{equation}

It follows that
\begin{equation}\label{4.1}
	\Delta_p\phi+\frac{k-1}{r}|\nabla \phi|^{p-2}\phi_r+\frac{n-k-1}{t}|\nabla \phi|^{p-2}\phi_t=\frac{n(p-1)}{p}\frac{|\nabla \phi|^{p}}{\phi}+\frac{a}{r}.
\end{equation}
where $a=\frac{\kappa^{\frac{p(p-1)}{n-p}}p^{p-1}}{(n-p)^{p-1}}$, and we will compute $\kappa$ by $a$. \\
We define $X^i=|\nabla \phi|^{p-2}\phi_i, X^i_j=(|\nabla\phi|^{p-2}\phi_i)_j$, $E^{i}_j=X^{i}_j-\frac{tr(X^{i}_j)}{2}\delta_{ij}$,\quad $i,j=r,t$,
\begin{align*}
h=r^{k-1}t^{n-k-1},h_r=\frac{k-1}{r}h,h_t=\frac{n-k-1}{t}h.
\end{align*}

For $\varepsilon>0$, we do the following calculations in $W_\varepsilon\backslash Z$ since $\phi$ is smooth in $W_\varepsilon\backslash Z$. Now, we begin to compute,
\begin{align*}
	(E^{i}_jX^{j}\phi^{1-n}h)_i=&E^{i}_jE^{j}_i\phi^{1-n}h+(E^{i}_{j})_i|\nabla \phi|^{p-2}\phi_j\phi^{1-n}h+\\&(1-n)E^{i}_j|\nabla \phi|^{p-2}\phi_j\phi_i\phi^{-n}h+E^i_j|\nabla \phi|^{p-2}\phi_j\phi^{1-n}h_i\\=&I+II+III+IV.
\end{align*}
To deal with $II$, we note that 
\begin{align*}
	(E^i_j)_i=\frac{1}{2}(\Delta_p\phi)_j.
\end{align*}
Let 
\begin{align*}
	\Delta_p\phi=\frac{n(p-1)}{p}\frac{|\nabla\phi|^p}{\phi}+f,
\end{align*}
where 
\begin{align*}
	f=\frac{a}{r}-\frac{k-1}{r}|\nabla \phi|^{p-2}\phi_r-\frac{n-k-1}{t}|\nabla \phi|^{p-2}\phi_t.
\end{align*}
Then 
\begin{align*}
	(\Delta_p\phi)_j|\nabla \phi|^{p-2}\phi_j\phi^{1-n}h=&n(p-1)|\nabla\phi|^{2p-4}\phi_i\phi_j\phi_{ij}\phi^{-n}h-\frac{n(p-1)}{p}|\nabla \phi|^{2p}\phi^{-1-n}h+\\&f_j|\nabla\phi|^{p-2}\phi_j\phi^{1-n}h.
\end{align*}
For the second term II,
\begin{align*}
	II=&n(p-1)|\nabla\phi|^{2p-4}\phi_i\phi_j\phi_{ij}\phi^{-n}h-\frac{n(p-1)}{p}|\nabla \phi|^{2p}\phi^{-1-n}h+\\&f_j|\nabla\phi|^{p-2}\phi_j\phi^{1-n}h-\frac{1}{2}(\Delta_p \phi)_j|\nabla \phi|^{p-2}\phi_j\phi^{1-n}h.
	\end{align*}
And for the third term III,
\begin{align*}
	III=&(1-n)(|\nabla \phi|^{p-2}\phi_i)_j\phi_i\phi_j|\nabla \phi|^{p-2}\phi^{-n}h-\frac{1-n}{2}\Delta_p\phi|\nabla\phi|^{p}\phi^{-n}h\\=&(|\nabla \phi|^{p-2}\phi_i)_j\phi_i\phi_j|\nabla\phi|^{p-2}\phi^{-n}h-n(p-1)|\nabla \phi|^{2p-4}\phi_i\phi_j\phi_{ij}\phi^{-n}h\\&-\frac{1-n}{2}\Delta_p\phi|\nabla\phi|^{p}\phi^{-n}h.
\end{align*}
Since
\begin{align*}
	(|\nabla\phi|^{p-2} \phi_i)_j\phi_i\phi_j &=(p-2)|\nabla \phi|^{p-4}\phi_l\phi_{lj}\phi_i\phi_i\phi_j+|\nabla \phi|^{p-2}\phi_{ij}\phi_i\phi_j\\&=(p-1)|\nabla \phi|^{p-2}\phi_{ij}\phi_i\phi_j,
\end{align*}
and 
\begin{align*}
	IV=(|\nabla \phi|^{p-2}\phi_i)_j|\nabla \phi|^{p-2}\phi_jh_i\phi^{1-n}-\frac{1}{2}\Delta_p\phi|\nabla \phi|^{p-2}\phi_ih_i\phi^{1-n}.
\end{align*}
It follows that we have 
\begin{align*}
	(E^{i}_jX^j\phi^{1-n}h)_i=&E^i_jE^j_i\phi^{1-n}h-\frac{n(p-1)}{p}|\nabla \phi|^{2p}\phi^{-1-n}h+f_j|\nabla\phi|^{p-2}\phi_j\phi^{1-n}h\\&-\frac{1}{2}(\Delta_p \phi)_j|\nabla \phi|^{p-2}\phi_j\phi^{1-n}h+(|\nabla \phi|^{p-2}\phi_i)_j\phi_i\phi_j|\nabla\phi|^{p-2}\phi^{-n}h\\&-\frac{1-n}{2}\Delta_p\phi|\nabla\phi|^{p}\phi^{-n}h\\&+(|\nabla \phi|^{p-2}\phi_i)_j|\nabla \phi|^{p-2}\phi_jh_i\phi^{1-n}-\frac{1}{2}\Delta_p\phi|\nabla \phi|^{p-2}\phi_ih_i\phi^{1-n}\\
	=&E^i_jE^j_i\phi^{1-n}h-\frac{n(p-1)}{p}|\nabla \phi|^{2p}\phi^{-1-n}h+f_j|\nabla\phi|^{p-2}\phi_j\phi^{1-n}h\\&+(|\nabla \phi|^{p-2}\phi_i)_j\phi_i\phi_j|\nabla\phi|^{p-2}\phi^{-n}h+(|\nabla \phi|^{p-2}\phi_i)_j|\nabla \phi|^{p-2}\phi_jh_i\phi^{1-n}\\&-\frac{1}{2}[\Delta_p\phi|\nabla \phi|^{p-2}\phi_i\phi^{1-n}h]_i+\frac{1}{2}(\Delta_p\phi)^{2}\phi^{1-n}h,	
	\end{align*}
since
\begin{align*}
	&-\frac{1}{2}(\Delta_p\phi)_i|\nabla \phi|^{p-2}\phi_i\phi^{1-n}h\\=&-\frac{1}{2}[(\Delta_p\phi)|\nabla\phi|^{p-2}\phi_i\phi^{1-n}h]_i+\frac{1}{2}(\Delta_p\phi)^2\phi^{1-n}h\\&+\frac{1-n}{2}\Delta_p\phi|\nabla \phi|^{p}\phi^{-n}h+\frac{1}{2}\Delta_p\phi|\nabla \phi|^{p-2}\phi_ih_i\phi^{1-n}.
\end{align*}
Observe that
\begin{align*}
&\left[\frac{p-1}{p}|\nabla \phi|^{p}(|\nabla \phi|^{p-2}\phi_i)\phi^{-n}h\right]_i\\=&\frac{p-1}{p}(p|\nabla \phi|^{2p-4}\phi_{ij}\phi_j\phi_i\phi^{-n}h+|\nabla \phi|^{p}\Delta_p\phi\phi^{-n}h\\&+|\nabla\phi|^{2p-2}\phi_{i}h_i\phi^{-n}-n|\nabla\phi|^{2p}\phi^{-1-n}h),
\end{align*}
then
\begin{align*}
	(|\nabla \phi|^{p-2}\phi_i)_j\phi_i\phi_j|\nabla\phi|^{p-2}\phi^{-n}h=&(p-1)|\nabla \phi|^{2p-4}\phi_i\phi_j\phi_{ij}\phi^{-n}h\\=&[\frac{p-1}{p}|\nabla \phi|^p(|\nabla \phi|^{p-2}\phi_i)\phi^{-n}h]_i-\frac{p-1}{p}|\nabla\phi|^p\Delta_p\phi\phi^{-n}h\\&-\frac{p-1}{p}|\nabla \phi|^{2p-2}h_i\phi_i\phi^{-n}+\frac{n(p-1)}{p}|\nabla\phi|^{2p}\phi^{-1-n}h.
\end{align*}
No we get
\begin{equation}\label{4.2}
\begin{aligned}
 &(E^i_jX^j\phi^{1-n}h)_i\\
 =&E^i_jE^j_i\phi^{1-n}h-\frac{1}{2}(\Delta_p\phi |\nabla\phi|^{p-2}\phi_i\phi^{1-n}h)_i+\frac{p-1}{p}[|\nabla\phi|^{p}(|\nabla\phi|^{p-2}\phi_i)\phi^{-n}h]_i\\
 &+\frac{1}{2}(\Delta_p\phi)^2\phi^{1-n}h-\frac{p-1}{p}\Delta_p\phi|\nabla\phi|^p\phi^{-n}h+A,
\end{aligned}
\end{equation}
where,
\begin{align*}
	A=&-\frac{p-1}{p}|\nabla\phi|^{2p-2}h_i\phi_i\phi^{-n}+(|\nabla \phi|^{p-2}\phi_i)_j|\nabla\phi|^{p-2}h_i\phi_j\phi^{1-n}+f_i\phi_i|\nabla\phi|^{p-2}\phi^{1-n}h\\
	=&-\frac{p-1}{p}\frac{k-1}{r}|\nabla\phi|^p|\nabla\phi|^{p-2}\phi_r\phi^{-n}h-\frac{p-1}{p}\frac{n-k-1}{t}|\nabla\phi|^p|\nabla\phi|^{p-2}\phi_t\phi^{-n}h\\
	&+\frac{k-1}{r^2}|\nabla\phi|^{2p-4}\phi_r^2\phi^{1-n}h-\frac{a}{r^2}|\nabla\phi|^{p-2}\phi_r\phi^{1-n}h+\frac{n-k-1}{t^2}|\nabla\phi|^{2p-4}\phi_t^2\phi^{1-n}h.
\end{align*}
If we multiply (\ref{4.1}) with $|\nabla\phi|^p {\phi}^{1-n}h$, then
\begin{equation}\label{4.2a}
	\begin{aligned}
&\frac{1}{2}(\Delta_p\phi)^2\phi^{1-n}h-\frac{p-1}{p}\Delta_p\phi|\nabla\phi|^p\phi^{-n}h+A\\=&\frac{1}{2}(\Delta_p\phi)^2\phi^{1-n}h-\frac{n(p-1)^2}{p^2}|\nabla\phi|^{2p}\phi^{-1-n}h-\frac{p-1}{p}\frac{a}{r}|\nabla\phi|^{p}\phi^{-n}h\\
&+\frac{k-1}{r^2}|\nabla\phi|^{2p-4}\phi_r^2\phi^{1-n}h-\frac{a}{r^2}|\nabla\phi|^{p-2}\phi_r\phi^{1-n}h+\frac{n-k-1}{t^2}|\nabla\phi|^{2p-4}\phi_t^2\phi^{1-n}h.
\end{aligned}
\end{equation}
To eliminate $\frac{a}{r}|\nabla\phi|^p\phi^{-n}h$, we introduce 
\begin{align}\label{4.2b}
	B=\left\{\frac{p-1}{p}\frac{|\nabla\phi|^p}{\phi}\phi^{1-n}h-(|\nabla\phi|^{p-2}\phi_r)_r\phi^{1-n}h\right\}_r-\{(|\nabla\phi|^{p-2}\phi_t)_r\phi^{1-n}h\}_t
\end{align}
Differentiating  $(\ref{4.1})$ with respect to $r$, we get 
\begin{equation}\label{4.2c}
	\begin{aligned}
	B&=\left(\frac{p-1}{p}\frac{|\nabla\phi|^p}{\phi}\right)_r\phi^{1-n}h+\frac{p-1}{p}\frac{k-1}{r}|\nabla\phi|^p\phi^{-n}h+\frac{p-1}{p}(1-n)\frac{|\nabla\phi|^p}{\phi}\frac{\phi_r}{\phi}\phi^{1-n}h\\&+\left\{-\frac{k-1}{r^2}|\nabla\phi|^{p-2}\phi_r-\frac{n(p-1)}{p}(\frac{|\nabla\phi|^p}{\phi})_r+\frac{a}{r^2}\right\}\phi^{1-n}h\\&-(1-n)\{(|\nabla\phi|^{p-2}\phi_r)_r\frac{\phi_r}{\phi}+(|\nabla\phi|^{p-2}\phi_t)_r\frac{\phi_t}{\phi}\}\phi^{1-n}h\\&=\frac{p-1}{p}\frac{k-1}{r}|\nabla\phi|^p\phi^{-n}h-\frac{k-1}{r^2}|\nabla\phi|^{p-2}\phi_r\phi^{1-n}h+\frac{a}{r^2}\phi^{1-n}h,
\end{aligned}
\end{equation}
if we show
\begin{align*}
	\frac{p-1}{p}\left(\frac{|\nabla\phi|^p}{\phi}\right)_r+\frac{p-1}{p}\frac{|\nabla\phi|^p}{\phi}\frac{\phi_r}{\phi}-(|\nabla\phi|^{p-2}\phi_r)_r\frac{\phi_r}{\phi}-(|\nabla\phi|^{p-2}\phi_t)_r\frac{\phi_t}{\phi}=0.
\end{align*}
Indeed, note that
\begin{align*}
	\left(\frac{|\nabla\phi|^p}{\phi}\right)_r=\frac{p|\nabla\phi|^{p-2}(\phi_r\phi_{rr}+\phi_t\phi_{tr})}{\phi}-\frac{|\nabla\phi|^{p}}{\phi}\frac{\phi_r}{\phi},
\end{align*}
it is equivalent to verify that	
\begin{align*}
	(p-1)|\nabla\phi|^{p-2}(\phi_r\phi_{rr}+\phi_t\phi_{tr})=(|\nabla\phi|^{p-2}\phi_r)_r\phi_r+(\nabla\phi|^{p-2}\phi_t)_r\phi_t,
\end{align*}
which is correct by  direct computation.\\

So we combine (\ref{4.2}) and (\ref{4.2c}),  $(\ref{4.2})+\frac{a}{k-1}B$ gives
\begin{equation}\label{4.3}
	\begin{aligned}
		&(E^i_jX^j\phi^{1-n}h)_i+\frac{1}{2}[\Delta_p\phi|\nabla\phi|^{p-2}\phi_i\phi^{1-n}h]_i-\frac{p-1}{p}(|\nabla\phi|^p|\nabla\phi|^{p-2}\phi_i\phi^{-n}h)_i\\&+\frac{a}{k-1}\left\{\bigg[\frac{p-1}{p}\frac{|\nabla\phi|^p}{\phi}\phi^{1-n}h-(|\nabla\phi|^{p-2}\phi_r)_r\phi^{1-n}h\bigg]_r-[(|\nabla\phi|^{p-2}\phi_t)_r\phi^{1-n}h]_t\right\}\\&=E^{i}_jE^j_i\phi^{1-n}h+\frac{n}{2(n-2)}\left(\Delta_p\phi-\frac{2(p-1)}{p}\frac{|\nabla\phi|^p}{\phi}\right)^2\phi^{1-n}h\\&+\frac{(k-1)(n-k-1)}{n-2}\left[\frac{|\nabla\phi|^{p-2}\phi_t}{t}-\frac{|\nabla\phi|^{p-2}\phi_r}{r}+\frac{a}{(k-1)r}\right]^2\phi^{1-n}h,
	\end{aligned}	
\end{equation}
by
\begin{align*}
	&\frac{1}{n-2}\left(\Delta_p\phi-\frac{n(p-1)}{p}\frac{|\nabla\phi|^p}{\phi}\right)^2+\frac{1}{2}(\Delta_p\phi)^2-\frac{n(p-1)^2}{p^2}\frac{|\nabla\phi|^{2p}}{\phi^2}\\=&\frac{n}{2(n-2)}\left(\Delta_p\phi-\frac{2(p-1)}{p}\frac{|\nabla\phi|^p}{\phi}\right)^2,
\end{align*}
and 
\begin{align*}
	&-\frac{1}{n-2}\left[(n-k-1)\frac{|\nabla\phi|^{p-2}\phi_t}{t}+(k-1)\frac{|\nabla\phi|^{p-2}\phi_r}{r}-\frac{a}{r}\right]^2\\&+\frac{k-1}{r^2}|\nabla\phi|^{2p-4}\phi_r^2-\frac{2a}{r^2}|\nabla\phi|^{p-2}\phi_r+\frac{n-k-1}{t^2}|\nabla\phi|^{2p-4}\phi_t^2+\frac{a^2}{(k-1)r^2}\\&=\frac{(k-1)(n-k-1)}{(n-2)}\left[\frac{|\nabla\phi|^{p-2}\phi_t}{t}-\frac{|\nabla\phi|^{p-2}\phi_r}{r}+\frac{a}{(k-1)r}\right]^2.
\end{align*}
Therefore, if we define
\begin{align}\label{4.3a}
	F=X^{r}_{r}X^r+X^r_tX^t-\frac{p-1}{p}\frac{|\nabla\phi|^p}{\phi}X^r+\frac{a}{k-1}\left(\frac{p-1}{p}\frac{|\nabla\phi|^{p}}{\phi}-X^r_r\right),
\end{align} 
and 
\begin{align}\label{4.3b}
	G=X^t_rX^r+X_t^tX^t-\frac{p-1}{p}\frac{|\nabla\phi|^{p}}{\phi}X^t-\frac{a}{k-1}X^t_r,
\end{align}
and let $Y$ be vector field
\begin{align}\label{4.3c}
	Y=(F\phi^{1-n}h,G\phi^{1-n}h)=\left(X^i_j-\frac{p-1}{p}\frac{|\nabla\phi|^p}{\phi}\delta_{ij}\right)\left(X^j-\frac{a}{k-1}\delta_{rj}\right){\phi}^{1-n}h.
\end{align}
We have the following key differential identity.
\begin{proposition}\label{prop4.1} For any $\varepsilon>0$, the following identity holds in $W_\varepsilon\backslash Z$.
\begin{equation}\label{4.4}
	\begin{aligned}
		divY=&E^{i}_jE^j_i\phi^{1-n}h+\frac{n}{2(n-2)}\left(\Delta_p\phi-\frac{2(p-1)}{p}\frac{|\nabla\phi|^p}{\phi}\right)^2\phi^{1-n}h\\&+\frac{(k-1)(n-k-1)}{n-2}\left[\frac{|\nabla\phi|^{p-2}\phi_t}{t}-\frac{|\nabla\phi|^{p-2}\phi_r}{r}+\frac{a}{(k-1)r}\right]^2\phi^{1-n}h\\
	=&\left(X^i_j-\frac{p-1}{p}\frac{|\nabla \phi|^p}{\phi}\delta_{ij}\right)\left(X^j_i-\frac{p-1}{p}\frac{|\nabla \phi|^p}{\phi}\delta_{ij}\right)\phi^{1-n}h\\
	&+\frac{n}{2(n-2)}\left(\Delta_p\phi-\frac{2(p-1)}{p}\frac{|\nabla\phi|^p}{\phi}\right)^2\phi^{1-n}h\\
	&+\frac{(k-1)(n-k-1)}{n-2}\left[\frac{|\nabla\phi|^{p-2}\phi_t}{t}-\frac{|\nabla\phi|^{p-2}\phi_r}{r}+\frac{a}{(k-1)r}\right]^2\phi^{1-n}h	\geq 0		\end{aligned}
	\end{equation}
\end{proposition}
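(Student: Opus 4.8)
The plan is to obtain (\ref{4.4}) as the pointwise positivity of a divergence identity of Obata / Jerison--Lee type, adapted to the degenerate $p$-Laplacian and to the cylindrical weight $h=r^{k-1}t^{n-k-1}$. All computations are carried out in $W_\varepsilon\backslash Z$, where, by the regularity results of the previous section, $\phi$ is smooth with $\nabla\phi\neq 0$, so that the factors $|\nabla\phi|^{p-2}$ and $|\nabla\phi|^{p-4}$ are honest smooth functions and every Leibniz expansion below is legitimate.

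First I would substitute $u=\kappa\phi^{-\frac{n-p}{p}}$ into (\ref{1.2}) to reach the transformed equation (\ref{4.1}) for $\phi$, a weighted $p$-Laplace equation with Hardy forcing $a/r$. With $X^i=|\nabla\phi|^{p-2}\phi_i$, $X^i_j=\partial_jX^i$ and $E^i_j=X^i_j-\tfrac12(X^r_r+X^t_t)\delta_{ij}$ (so $X^r_r+X^t_t=\Delta_p\phi$ in the planar $(r,t)$ sense of this section, hence $(E^i_j)_i=\tfrac12(\Delta_p\phi)_j$), I would expand $\operatorname{div}(E^i_jX^j\phi^{1-n}h)$ by the product rule and eliminate $(\Delta_p\phi)_j$ using (\ref{4.1}). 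This produces identity (\ref{4.2}): the nonnegative term $E^i_jE^j_i\phi^{1-n}h$, a bundle of exact divergences, the scalar combination $\tfrac12(\Delta_p\phi)^2\phi^{1-n}h-\tfrac{p-1}{p}\Delta_p\phi\,|\nabla\phi|^p\phi^{-n}h$, and a remainder $A$ collecting the weight contributions $h_r,h_t$ and the Hardy forcing.

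The next step is to remove the sign-indefinite term. Inserting (\ref{4.1}) into the scalar part of (\ref{4.2}) leaves a term proportional to $\tfrac{a}{r}|\nabla\phi|^p\phi^{-n}h$ with no a priori sign; to cancel it I would differentiate (\ref{4.1}) in $r$ and recast the result as the divergence $B$ of the explicit planar field (\ref{4.2b}). The cancellation rests on the pointwise identity
\[
\tfrac{p-1}{p}\Bigl(\tfrac{|\nabla\phi|^p}{\phi}\Bigr)_r+\tfrac{p-1}{p}\tfrac{|\nabla\phi|^p}{\phi}\tfrac{\phi_r}{\phi}-(|\nabla\phi|^{p-2}\phi_r)_r\tfrac{\phi_r}{\phi}-(|\nabla\phi|^{p-2}\phi_t)_r\tfrac{\phi_t}{\phi}=0 ,
\]
which collapses to $(p-1)|\nabla\phi|^{p-2}(\phi_r\phi_{rr}+\phi_t\phi_{tr})=(|\nabla\phi|^{p-2}\phi_r)_r\phi_r+(|\nabla\phi|^{p-2}\phi_t)_r\phi_t$ and is checked directly. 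Forming $(\ref{4.2})+\tfrac{a}{k-1}B$ and completing two squares, the scalar terms recombine (by a short algebraic identity) into $\tfrac{n}{2(n-2)}\bigl(\Delta_p\phi-\tfrac{2(p-1)}{p}\tfrac{|\nabla\phi|^p}{\phi}\bigr)^2\phi^{1-n}h$ --- equivalently, part of it is absorbed by rewriting $E^i_jE^j_i$ through the shifted Hessian $X^i_j-\tfrac{p-1}{p}\tfrac{|\nabla\phi|^p}{\phi}\delta_{ij}$, which is the second form displayed in (\ref{4.4}) --- while the residual weight terms $\tfrac{k-1}{r^2}|\nabla\phi|^{2p-4}\phi_r^2+\tfrac{n-k-1}{t^2}|\nabla\phi|^{2p-4}\phi_t^2-\tfrac{2a}{r^2}|\nabla\phi|^{p-2}\phi_r+\tfrac{a^2}{(k-1)r^2}$ assemble into $\tfrac{(k-1)(n-k-1)}{n-2}\bigl[\tfrac{|\nabla\phi|^{p-2}\phi_t}{t}-\tfrac{|\nabla\phi|^{p-2}\phi_r}{r}+\tfrac{a}{(k-1)r}\bigr]^2\phi^{1-n}h$. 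Collecting every exact divergence on the left, the total is exactly $\operatorname{div}Y$ for $Y$ as in (\ref{4.3a})--(\ref{4.3c}); since $n\ge4$ and $3\le k\le n-1$ give $n-2>0$ and $(k-1)(n-k-1)>0$, while $\phi^{1-n}h>0$ on $W_\varepsilon\backslash Z$, each term on the right is nonnegative, so $\operatorname{div}Y\ge0$.

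The main obstacle is not the (lengthy but mechanical) bookkeeping with the $p$-Laplacian weights $|\nabla\phi|^{p-2},|\nabla\phi|^{p-4}$, but the discovery of the correct vector field: the two shifts built into (\ref{4.3c}), replacing $X^i_j$ by $X^i_j-\tfrac{p-1}{p}\tfrac{|\nabla\phi|^p}{\phi}\delta_{ij}$ and $X^j$ by $X^j-\tfrac{a}{k-1}\delta_{rj}$, are precisely what makes the scalar and the cylindrical-weight obstructions collapse into perfect squares, and recognizing that the residual $\tfrac{a}{r}|\nabla\phi|^p\phi^{-n}h$ must be killed by radial differentiation of the equation (the $B$-term) rather than by algebra at a fixed point is the conceptual crux. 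A secondary point is that the whole pointwise calculus is valid only where $\nabla\phi\neq0$, which is why the identity is stated on $W_\varepsilon\backslash Z$; passing from it to global statements about $u$ is the business of the integral estimates of Section~2.
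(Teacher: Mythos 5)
Your outline of the divergence identity itself follows the paper's route: expand $(E^i_jX^j\phi^{1-n}h)_i$, substitute the transformed equation (\ref{4.1}) for $(\Delta_p\phi)_j$, kill the leftover $\tfrac{a}{r}|\nabla\phi|^p\phi^{-n}h$ term by adding $\tfrac{a}{k-1}B$ with $B$ the divergence coming from the $r$-derivative of (\ref{4.1}), and complete the two squares. That part matches the paper and is fine.

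The gap is in your final step, where you declare that ``each term on the right is nonnegative.'' The first term is $M^i_jM^j_i\,\phi^{1-n}h$ with $M^i_j=X^i_j-\tfrac{p-1}{p}\tfrac{|\nabla\phi|^p}{\phi}\delta_{ij}$, i.e.\ $\operatorname{tr}(M^2)$ for the matrix $M$. For $p\neq 2$ the matrix $X^i_j=(|\nabla\phi|^{p-2}\phi_i)_j=(p-2)|\nabla\phi|^{p-4}\phi_i\phi_l\phi_{lj}+|\nabla\phi|^{p-2}\phi_{ij}$ is \emph{not} symmetric, and $\operatorname{tr}(M^2)$ for a non-symmetric matrix can be negative (a rotation by $\pi/2$ gives $\operatorname{tr}(M^2)=-2$). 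You already call $E^i_jE^j_i$ ``the nonnegative term'' at the level of (\ref{4.2}) without justification, and this is exactly the one point for which the paper's proof supplies an explicit argument: it factors $M=N_3\,(N_4+N_3^{-1}N_5)$ with $N_3=\delta_{ij}+(p-2)\phi_i\phi_j/|\nabla\phi|^2$ symmetric positive definite (eigenvalues $1$ and $p-1$) and the second factor symmetric. Such a product is similar to a symmetric matrix, hence has real eigenvalues, so $\operatorname{tr}(M^2)=\sum\lambda_i^2\ge 0$; Lemma 4.5 of \cite{CM18} even upgrades this to $\operatorname{tr}(M^2)\ge c\,\|M\|^2$, which is what Lemma \ref{lemma4.2} requires later. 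Without this factorization (or an equivalent argument for the real diagonalizability of $M$), your proof of the inequality in (\ref{4.4}) is incomplete.
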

\begin{proof}
From (\ref{4.3}), we get the  differential identity in (\ref{4.4}), in order to prove the right of (\ref{4.4}) is nonnegative,	it suffices to show 
	\begin{align*}
		\left(X^i_j-\frac{p-1}{p}\frac{|\nabla \phi|^p}{\phi}\delta_{ij}\right)\left(X^j_i-\frac{p-1}{p}\frac{|\nabla \phi|^p}{\phi}\delta_{ij}\right)\geq c\left(X^i_j-\frac{p-1}{p}\frac{|\nabla \phi|^p}{\phi}\delta_{ij}\right)^2,
		\end{align*}
		for some $c>0$. From the Lemma 4.5 in  \cite{CM18}, we only need  to show that
		\begin{align}\label{4.4a}
			X^i_j-\frac{p-1}{p}\frac{|\nabla \phi|^p}{\phi}\delta_{ij}=(N_1N_2)_{ij},
		\end{align} 
		where $N_1$ is positive define and $N_2$ is symmetric. 
		
		First,
		\begin{align*}
			X^i_j-\frac{p-1}{p}\frac{|\nabla\phi|^{p}}{\phi}\delta_{ij}=N_3N_4+N_5,
		\end{align*}
		where $N_4=|\nabla \phi|^{p-2}\phi_{ij}$, $N_3=(p-2)\frac{\phi_i\phi_j}{|\nabla\phi|^2}+\delta_{ij}$ and  $N_5=\frac{p-1}{p}\frac{|\nabla \phi|^{p}}{\phi}\delta_{ij}$. Then $N_3$ is positive  define with eigenvalues $1$ and $p-1$. $N^{-1}_3=\delta_{ij}-\frac{p-2}{p-1}\frac{\phi_i\phi_j}{|\nabla\phi|^2}$.
		
		It follows that
		\begin{align*}
			X^i_j-\frac{p-1}{p}\frac{|\nabla\phi|^{p}}{\phi}\delta_{ij}=N_3(N_4+N_3^{-1}N_5)
		\end{align*}
		where $N_4+N_{3}^{-1}N_5$ is symmetric.
		
		Setting $N_1=N_3$, $N_2=N_4+N_3^{-1}N_5$, we have done. 		
						\end{proof}

	In order to get the integral estimates, we need the following estimates.				
\begin{lemma}\label{lemma4.2}
	For any $\epsilon >0$, we have 
\begin{equation}
	\begin{aligned}
	  \left(X^i_j-\frac{p-1}{p}\frac{|\nabla \phi|^p}{\phi}\delta_{ij}\right)\left(X^j-\frac{a}{k-1}\delta_{rj}\right)\zeta_j\leq &\epsilon \left(X^i_j-\frac{p-1}{p}\frac{|\nabla \phi|^p}{\phi}\delta_{ij}\right)\left(X^j_i-\frac{p-1}{p}\frac{|\nabla \phi|^p}{\phi}\delta_{ij}\right)\\&+C(\epsilon)|\zeta_j|^2(|X^i|^2+1).	
	    	\end{aligned}\end{equation}
	
\end{lemma}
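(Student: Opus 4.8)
The plan is to deduce the estimate from Cauchy--Schwarz together with Young's inequality, using the algebraic structure that already appeared in the proof of Proposition \ref{prop4.1}. I would first fix the notation $M^i_j:=X^i_j-\frac{p-1}{p}\frac{|\nabla\phi|^p}{\phi}\delta_{ij}$ and $V^j:=X^j-\frac{a}{k-1}\delta_{rj}$, so that the left-hand side of the claimed inequality is the pointwise scalar $M^i_jV^j\zeta_i$ --- that is, the vector field $Y$ of (\ref{4.3c}) with the weight $\phi^{1-n}h$ stripped off, paired with $\zeta$ --- while the first term on the right-hand side is exactly the contracted square $M^i_jM^j_i$ occurring in $\operatorname{div}Y$ in (\ref{4.4}). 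Thus the statement says precisely that $|\langle Y,\zeta\rangle|$ can be split into an $\epsilon$-fraction of the good term in $\operatorname{div}Y$ plus a remainder that will later be absorbed by the integral bounds.

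The first substantive step is to record that $M^i_jM^j_i$ dominates the full Frobenius norm $|M|^2:=\sum_{i,j}(M^i_j)^2$. This is the content of the factorization carried out for Proposition \ref{prop4.1}: writing $M=N_3\bigl(N_4+N_3^{-1}N_5\bigr)=N_1N_2$ with $N_1=N_3$ positive definite (eigenvalues $1$ and $p-1$) and $N_2$ symmetric, Lemma 4.5 of \cite{CM18} yields a constant $c=c(p)>0$ with $M^i_jM^j_i\ge c(p)\,|M|^2$ everywhere on $W_\varepsilon\backslash Z$. The second step is elementary: since $|X^i|^2=|\nabla\phi|^{2(p-1)}$ we have $|V|^2\le 2|X|^2+2a^2(k-1)^{-2}\le C(|X|^2+1)$, and Cauchy--Schwarz over the index pair $(i,j)$ gives $|M^i_jV^j\zeta_i|\le|M|\,|V|\,|\zeta|$. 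Combining the two steps with Young's inequality, for every $\delta>0$,
\[
|M^i_jV^j\zeta_i|\ \le\ \delta\,|M|^2+\frac{C}{4\delta}\,|\zeta|^2(|X|^2+1)\ \le\ \frac{\delta}{c(p)}\,M^i_jM^j_i+\frac{C}{4\delta}\,|\zeta|^2(|X|^2+1),
\]
and the choice $\delta=c(p)\,\epsilon$ produces the stated inequality with $C(\epsilon)=\frac{C}{4c(p)\epsilon}$.

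I do not expect a real obstacle here; the one point requiring attention is the \emph{uniformity} of the constant $c(p)$ in the first step, i.e.\ that it does not degenerate along a sequence where $\nabla\phi\to 0$. This is automatic, since $c(p)$ is governed only by the fixed eigenvalues $1$ and $p-1$ of $N_3$ and not by $\nabla\phi$ itself. Note also that the sign-indefinite shift $-\frac{a}{k-1}\delta_{rj}$ in $V^j$ is harmless: it contributes only the additive $+1$ inside $(|X^i|^2+1)$, which is exactly the shape needed so that, after multiplying by $\phi^{1-n}h$ and integrating against $|\nabla\eta|^2$ for a cutoff $\eta$, the remainder term is controlled by the integral estimates of Lemma \ref{lem3.3} and Lemma \ref{lem3.4}.
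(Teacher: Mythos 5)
Your argument is correct and is essentially the paper's own proof: both reduce the inequality to the coercivity $M^i_jM^j_i\ge c(p)\,|M|^2$ coming from the factorization $M=N_1N_2$ (with $N_1$ positive definite of eigenvalues $1$ and $p-1$, and $N_2$ symmetric) already set up in Proposition~\ref{prop4.1}, and then conclude by Cauchy--Schwarz and Young together with $|V|^2\le C(|X^i|^2+1)$. The only cosmetic difference is that the paper carries out the diagonalization of $N_1$ explicitly by hand, while you quote Lemma 4.5 of \cite{CM18} a second time.
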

\begin{proof}
	The same notations as above, from (\ref{4.4a}), we have
	\begin{align*}
		X^i_j-\frac{p-1}{p}\frac{|\nabla\phi|^p}{\phi}\delta_{ij}=AC^TBC,
			\end{align*}
			where $C$ is a orthogonal matrix, 
			\begin{align*}
				C^{-1}=C^T,
			\end{align*}
			
			$A$ is a symmetric matrix,   $(B)_{ij}=(p-2)\frac{\phi_i\phi_j}{|\nabla\phi|^2}+\delta_{ij}$ where $B$ is positive  define with eigenvalues $1$ and $p-1$, and  $(BC)_{ij}=\lambda_i\delta_{ij}, \lambda_1=p-1, \lambda_{i}=1, i=2,...,n$.
			It follows that
			\begin{align*}
				&tr(AC^TBCD)\\=&(AC^T)_{ij}(BC)_{jk}D_{ki}\\=&(AC^T)_{ij}\lambda_j D_{ji}\\\leq &\epsilon (AC^T)^2_{ij}+C(\epsilon) D_{ij}^2\\\leq &\epsilon tr (AC^T(AC^{T})^T)+C(\epsilon) tr(D^2)
			\end{align*}
			and 
			\begin{align*}
				tr(AC^TBCAC^{T}BC)=(AC^T)_{ij}\lambda_j(AC^T)_{ji}\lambda_i\geq \lambda^2 tr(AC^T(AC^T)^T)
			\end{align*}
		where $\lambda=\min\{1, p-1\}$.
			Then we have
			\begin{align}\label{4.4b}
				tr(AC^TBCD)\leq \epsilon tr(AC^TBCAC^TBC)+C(\epsilon)tr(D^2)
			\end{align}
			Substituting $(D)_{ij}=(X^j-\frac{a}{k-1}\delta_{rj})\zeta_i$ and using
			\begin{align*}
				tr(D^2)=(X^i-\frac{a}{k-1}\delta_{ir})\zeta_j (X^j-\frac{a}{k-1}\delta_{jr})\zeta_i\leq C(|X^i|^2+1)|\zeta_j|^2,
			\end{align*}
			we have completed the proof.
			\end{proof}
Now we begin the proof of the main Theorem \ref{them1.4}, as in the paper by Mancini-Fabbri-Sandeep \cite{MFS06}, we shall use the key differential identity (\ref{4.4}) ,  the $C^0$ estimates (\ref{3.1}) , the integral gradient estimates in (\ref{3.3}) and (\ref{3.4c}).

\textbf{Proof of Theorem \ref{them1.4}}.   Let $R>0$ and define
\begin{align*}
	\Omega_{R}=\{(r,t):0<r^2+t^2<4R^2\}.
\end{align*}
Let $\zeta(r,t)$ be cut-off function in $\mathbb{R}^2$ with $\zeta=1$ in $(r^2+t^2)^{\frac{1}{2}}\leq R$, $\zeta=0$ in $(r^2+t^2)^{\frac{1}{2}}\geq 2R$ and $|\nabla \zeta|\leq \frac{C}{R}$. 
\begin{align*}
	T_\varepsilon=\{(r,t): 0<r<\varepsilon\},
\end{align*}
Let $\varphi(r,t)$ be a smooth function in $\mathbb{R}^2$ with $\varphi=0$ in $T_\varepsilon$, $\varphi=1$ outside $T_{2\varepsilon}$ and $\varphi_t=0$, $|\varphi_r|\leq \frac{C}{\varepsilon}$.

Since $Z$ has zero measure, for the vector field $Y$ in (\ref{4.3c}), integration by parts gives
\begin{align}\label{4.6}
	\int_{\Omega_{R}}\varphi^l\zeta^m divY=\int_{\partial\Omega_{R}}\varphi^l\zeta^m Y\cdot \nu dH^1-\int_{\Omega_{R}}\nabla(\varphi^l \zeta^m) \cdot Y,
\end{align}
where $m, l >2$ are big, and $\nu$ is the outward normal to $\partial\Omega_{R}$ and $dH^1$ is the surface measure on the boundary.
\begin{equation}\label{4.7}
	\begin{aligned}
\int_{\partial\Omega_{R}}\varphi^l\zeta^m Y\cdot \nu dH^1=&-\int_{\Gamma_1}\varphi^l\zeta^m G\phi^{1-n}hdH^1-\int_{\Gamma_2}\varphi^l\zeta ^mF\phi^{1-n}hdH^1\\&+\int_{r^2+t^2=4{R}^2} \varphi^l\zeta^m Y\cdot \nu dH^1,
\end{aligned}\end{equation}

where $\Gamma_1=\partial\Omega_{R}\cap\{t=0\}$, $\Gamma_2=\partial\Omega_{R}\cap\{r=0\}$. 

From the symmetry of $\phi$, we have $\phi_t(r,0)=0$. We observe that $G=0$ on $\Gamma_1$ since $|\nabla\phi|^{p-2}\phi_t=(|\nabla\phi|^{p-2}\phi_t)_r=0$ on $\Gamma_1$, $\varphi=0$ on $\Gamma_2$ and $\zeta=0$ on $r^2+t^2=4R^2$, therefore 
\begin{align}\label{4.8}
	\int_{\partial\Omega_R}\varphi^l\zeta^m Y\cdot \nu dH^1=0.
\end{align}

Now we need to deal with 
\begin{align}\label{4.9}
	-\int_{\Omega_R}\nabla(\varphi^l\zeta^m)\cdot Y=-l\int_{\Omega_R} \varphi^{l-1}\zeta^m\nabla\varphi\cdot Y -m\int_{\Omega_R}\varphi^l\zeta^{m-1}\nabla\zeta\cdot Y.
\end{align}

By the definition of $Y=\left(X^i_j-\frac{p-1}{p}\frac{|\nabla\phi|^p}{\phi}\delta_{ij}\right)\left(X^j-\frac{a}{k-1}\delta_{rj}\right)\phi^{1-n}h
$ and using Lemma \ref{lemma4.2} we have, 
\begin{equation}\label{4.10}
\begin{aligned}
	m\varphi^l\zeta^{m-1} |\nabla \zeta \cdot Y|\leq& 0.1\left(X^i_j-\frac{p-1}{p}\frac{|\nabla \phi|^p}{\phi}\delta_{ij}\right)\left(X^j_i-\frac{p-1}{p}\frac{|\nabla \phi|^p}{\phi}\delta_{ij}\right)\varphi^l\zeta^{m} \phi^{1-n}h\\&+\frac{C}{R^2}(|\nabla \phi|^{p-1}+1)^2\phi^{1-n}h\varphi^l \zeta^{m-2},
\end{aligned}
\end{equation}
in the support set of $\nabla\zeta\subset \Omega_{R}\backslash\Omega_\frac{R}{2}$.

 Using Lemma \ref{lem3.3} and Lemma \ref{lem3.4} (with $q=2p)$ we have
 \begin{align}\label{4.11}
 	\int_{B_R}|\nabla u|^{2p-2}u^\gamma \leq \left(\int_{B_R}|\nabla u|^{2p}u^\gamma\right)^\frac{p-1}{p}\left(\int_{B_R} u^\gamma\right)^\frac{1}{p}\leq C(1+R^{n-2(n-1)-\frac{\gamma(n-p)}{p-1}}).
 \end{align}
 Then
 \begin{align}\label{4.12}
 	\int_{\Omega_{R}}|\nabla\phi|^{2p-2}\phi^{1-n}r^{k-1}t^{n-k-1}=C\int_{B_{2R}}|\nabla u|^{2p-2}u^{\frac{p(n-1)}{n-p}-\frac{(2p-2)n}{n-p}} \leq C(1+R^{2-\frac{n-p}{p-1}}),
 	\end{align}
 Besides, by Proposition \ref{pro3.1} we have $c_1(1+r^2+t^2)^\frac{p}{2(p-1)}\leq \phi \leq c_2(1+r^2+t^2)^\frac{p}{2(p-1)}$, then
  \begin{align}\label{4.13}
 	\int_{\Omega_R}\phi^{1-n}r^{k-1}t^{n-k-1}\leq C(1+R^{-\frac{n-p}{p-1}}).
 \end{align}
 So by (\ref{4.12}) and (\ref{4.13}), we get
\begin{align}\label{4.14}
	\frac{1}{R^2}\int_{\Omega_R\backslash\Omega_{\frac{R}{2}}}\varphi^l\zeta^{m-2}(|\nabla \phi|^{p-1}+C)^2\phi^{1-n}h\leq C R^{-\tau},\end{align}
where $\tau=\min\{\frac{n-p}{p-1},2\}$.

Let
\begin{align}\label{4.15}
	Q_\varepsilon= {\Omega_R \cap \{T_{2\varepsilon}\backslash T_{\varepsilon}\}}\subset \Omega_R
\end{align}
Next, we estimate 
\begin{align}\label{4.16}
	\int_{\Omega_R}\varphi^{l-1}\zeta^{m}\nabla\zeta\cdot Y=
\int_{Q_\varepsilon} \varphi^{l-1} \zeta^m\varphi_r F \phi^{1-n}r^{k-1}t^{n-k-1}.
\end{align} 

For $k\geq 3$, the same as (\ref{4.10}),
\begin{equation}\label{4.17}
\begin{aligned}
	\varphi^{l-1} \zeta^m |\varphi_rF|\phi^{1-n}h\leq & 0.01\left(X^i_j-\frac{p-1}{p}\frac{|\nabla \phi|^p}{\phi}\delta_{ij}\right)\left(X^j_i-\frac{p-1}{p}\frac{|\nabla \phi|^p}{\phi}\delta_{ij}\right)\varphi^l\zeta^m\phi^{1-n}h\\&+\frac{C}{\varepsilon ^2}(|\nabla \phi|^{p-1}+1)^2\varphi^{l-2}\zeta^m\phi^{1-n}h,
	\end{aligned}
	\end{equation}
	in the support set of $\varphi_r\subset Q_\varepsilon$.
	
We have
\begin{equation}\label{4.18}
	\begin{aligned}
	&\frac{1}{\varepsilon ^2}\int_{Q_\varepsilon}\phi^{1-n}h\leq \frac{C}{\varepsilon^2}\int_\varepsilon ^{2\varepsilon}\int_{0}^{2R}(1+r^2+t^2)^{\frac{p(1-n)}{2(p-1)}}t^{n-k-1}dtr^{k-1}dr
	\\\leq &\frac{C}{\varepsilon^2}\int_\varepsilon ^{2\varepsilon}\int_0^{2R}(1+t^2)^{-\frac{(n-1)p}{2(p-1)}}t^{n-k-1}dt r^{k-1}dr\leq \frac{C}{\varepsilon^2}\int_\varepsilon ^{2\varepsilon} r^{k-1}dr
	\leq  C\varepsilon ^{k-2}.
\end{aligned}\end{equation}

Here we use $-\frac{(n-1)p}{p-1}+n-k-1=-\frac{n-1}{p-1}-k$, so $\int_{0}^\infty(1+t^2)^{-\frac{(n-1)p}{2(p-1)}}t^{n-k-1}dt\leq C$.
And by Holder's inequality, we have 
\begin{align}\label{4.19}
	\frac{1}{\varepsilon ^2}\int_{Q_\varepsilon}|\nabla \phi|^{2p-2}\phi^{1-n}h&\leq \frac{C}{\varepsilon ^2}\left(\int_{Q_\varepsilon}|\nabla \phi|^q\phi^{1-n}h\right)^{\frac{2p-2}{q}}\left(\int_{Q_\varepsilon}\phi^{1-n}h\right)^{\frac{q-2p+2}{q}}.
		\end{align}
		
	For $k>2p$, we set $q=2p$, if $k\leq 2p$, we set $q=\frac{2(p-1)k}{k-2.1}$,  by Lemma \ref{lem3.3} and Lemma \ref{lem3.4} it follows that	
	
	\begin{align}\label{4.20}
		\int_{Q_\varepsilon}|\nabla \phi|^q\phi^{1-n}h\leq\int_{\Omega_{R}}|\nabla\phi|^q\phi^{1-n}r^{k-1}t^{n-k-1}=C\int_{B_{2R}}|\nabla u|^{q}u^{\frac{p(n-1)}{n-p}-\frac{qn}{n-p}} dx\leq C(R).
			\end{align}	
Thus, by (\ref{4.19}) and (\ref{4.18}),
\begin{align}\label{4.21}
	\frac{1}{\varepsilon ^2}\int_{Q_\varepsilon}|\nabla \phi|^{2p-2}\phi^{1-n}h\leq \frac{C(R)}{\varepsilon^2}\left(\int_{Q_\varepsilon}\phi^{1-n}h\right)^{\frac{q-2p+2}{q}}\leq \frac{C(R)}{\varepsilon^2}\varepsilon^{\frac{k(q-2p+2)}{q}}=C(R)\varepsilon^\mu,
	\end{align}
	where
	\begin{align}\label{4.22}
		\mu=\frac{k(q-2p+2)}{q}-2.
	\end{align}
	
For $k>2p$, $q=2p$, then $\mu=\frac{k}{p}-2>0$. For $k\leq 2p$, $q=\frac{2(p-1)k}{k-2.1}$, then $$\mu=\frac{k(\frac{2pk-2k}{k-2.1}-\frac{2pk-4.2p}{k-2.1}+\frac{2k-4.2}{k-2.1})}{\frac{2(p-1)k}{k-2.1}}-2=\frac{\frac{4.2(p-1)k}{k-2.1}}{\frac{2(p-1)k}{k-2.1}}-2=2.1-2=0.1,$$	
	 we have $\mu>0$ in both cases.
	 
	 Combing (\ref{4.21}) and (\ref{4.18}),
\begin{align}\label{4.23}
	\frac{C}{\varepsilon ^2}\int_{Q_\varepsilon}(|\nabla \phi|^{p-1}+1)^2\phi^{1-n}h\leq C(R)\varepsilon^\mu,
	\end{align}
	
Using the fact (\ref{4.15}), then we arrive at
\begin{equation}\label{4.24}
	\begin{aligned}
		&\int_{\Omega_R}\varphi^l\zeta^m divY\\
	=&\int_{\Omega_R}\varphi^l\zeta^m\left(X^i_j-\frac{p-1}{p}\frac{|\nabla \phi|^p}{\phi}\delta_{ij}\right)\left(X^j_i-\frac{p-1}{p}\frac{|\nabla \phi|^p}{\phi}\delta_{ij}\right)\phi^{1-n}h\\&+\frac{1}{n-2}\varphi^l\zeta^m\left(\Delta_p\phi-\frac{2(p-1)}{p}\frac{|\nabla\phi|^p}{\phi}\right)^2\phi^{1-n}h\\&+\frac{(k-1)(n-k-1)}{n-2}\varphi^l\zeta^m\left[\frac{|\nabla\phi|^{p-2}\phi_t}{t}-\frac{|\nabla\phi|^{p-2}\phi_r}{r}+\frac{a}{(k-1)r}\right]^2\phi^{1-n}h	\\=&-\int _{\Omega_R}\nabla(\varphi^l\zeta^m)\cdot Y=-m\int_{\Omega_R}\varphi^l\zeta^{m-1}\nabla\zeta\cdot Y-l\int_{Q_\varepsilon} \varphi^{l-1}\zeta^m\varphi_r F
	\\\leq& 0.11\int_{\Omega_R}\left(X^i_j-\frac{p-1}{p}\frac{|\nabla \phi|^p}{\phi}\delta_{ij}\right)\left(X^j_i-\frac{p-1}{p}\frac{|\nabla \phi|^p}{\phi}\delta_{ij}\right)\varphi^l\zeta^{m} \phi^{1-n}h\\&+\frac{C}{R^2}\int_{\Omega_{R}}(|\nabla \phi|^{p-1}+1)^2\phi^{1-n}h\varphi^l \zeta^{m-2}+\frac{C}{\varepsilon^2}\int_{Q_\varepsilon}(|\nabla \phi|^{p-1}+1)^2\phi^{1-n}h\varphi^{l-2} \zeta^{m}.		\end{aligned}
	\end{equation}
	
From (\ref{4.14}), (\ref{4.23}), we have
\begin{align}\label{4.25}
	\frac{1}{2}\int_{\Omega_{R}}\varphi^l\zeta^mdiv Y\leq C R^{-\tau}+C(R)\varepsilon^\mu.
\end{align}
For any $R>0$, let $\varepsilon\rightarrow 0$, we have
\begin{align}\label{4.26}
	\frac{1}{2}\int_{\Omega_{R}}\zeta^mdiv Y\leq C R^{-\tau},
	\end{align}
then let $R\rightarrow \infty$,
\begin{align}\label{4.27}
	\frac{1}{2}\int_{r>0,t>0}divY=0.
\end{align}

Therefore by Proposition \ref{prop4.1},

\begin{align}\label{4.28}
	divY=0,
\end{align}
which means
\begin{align}\label{4.29}
	E^{i}_{j}=0,
	\end{align}
	\begin{align}\label{4.30}
	\Delta_p\phi=\frac{2(p-1)}{p}\frac{|\nabla\phi|^p}{\phi}.
\end{align}

Now we focus on $Z^c$ since $\phi \in C^{\infty}(Z^c\cap\{(r,t): r>0,t>0\})$.
From (\ref{4.29}), we have
\begin{align}\label{4.31}
	(|\nabla\phi|^{p-2}\phi_r)_t=0,(|\nabla\phi|^{p-2}\phi_t)_r=0,
\end{align}
and
\begin{align}
	(|\nabla \phi|^{p-2}\phi_r)_r=(|\nabla \phi|^{p-2}\phi_t)_t=\frac{1}{2}\Delta_p\phi.
\end{align}
If we differentiate $(\ref{4.31})$, then there is a constant $b$ such that
\begin{align*}
	(|\nabla\phi|^{p-2}\phi_r)_r=(|\nabla\phi|^{p-2}\phi_t)_t:=b,
\end{align*}
then
\begin{align*}
	|\nabla\phi|^{p-2}\phi_r=br+d_1, |\nabla\phi|^{p-2}\phi_t=bt+d_2,
\end{align*}
for some constants $d_1,d_2$.\\
By $(\ref{4.30})$ and $(\ref{4.1})$, 
\begin{align*}
	d_1=\frac{a}{k-1},d_2=0,
\end{align*}
which is
\begin{align}
	|\nabla\phi|^{p-2}\phi_r=br+\frac{a}{k-1}, |\nabla\phi|^{p-2}\phi_t=bt.
\end{align}
By $(\ref{4.30})$
\begin{align*}
	\phi=\frac{p-1}{pb}|\nabla\phi|^{p}.
\end{align*}
If we take $a=(k-1)(\frac{p}{p-1})^{p-1}$, then
\begin{align*}
	\phi&=\frac{p-1}{pb}[(bt)^2+(br+(\frac{p}{p-1})^{p-1})^2)]^\frac{p}{2(p-1)}\\&=\lambda^{-1}[(\lambda t)^2+(\lambda r+1)^2]^{\frac{p}{2(p-1)}},
\end{align*}
where $\lambda=b(\frac{p-1}{p})^{p-1}$. And
\begin{align*}
	\kappa=(k-1)^{\frac{n-p}{p(p-1)}}(n-p)^{\frac{n-p}{p}}(p-1)^{-\frac{n-p}{p}},
\end{align*}
then 
\begin{align*}
	u(y,z)=\lambda^{\frac{n-p}{p}}u_0(\lambda y,\lambda z),
\end{align*}
where $u_0$ is as in Theorem\ref{them1.4} in $Z^c$. Since $u$ is continuous and $Z$ has zero measure, then $ u(y,z)=\lambda^{\frac{n-p}{p}}u_0(\lambda y,\lambda z)  $ in $\mathbb{R}^n$.

\vspace{0.6cm}
{\it Acknowledgement.}
The authors was supported by  National Natural Science Foundation of China (grants 12141105) and National Key Research and Development Project (grants SQ2020YFA070080).

\end{document}